\numberwithin{equation}{section}
\newtheorem{theorem}{Theorem}[section]
\newtheorem{lemma}[theorem]{Lemma}
\newtheorem{proposition}[theorem]{Proposition}
\newtheorem{corollary}[theorem]{Corollary}
\newtheorem{claim}[theorem]{Claim}
\newtheorem{Maintheorem}{Main Theorem}
\newtheorem{Problem}[theorem]{Problem}
\theoremstyle{definition}
\newtheorem{definition}[theorem]{Definition}
\theoremstyle{remark}
\newtheorem{remark}[theorem]{Remark}
\newtheorem{example}[theorem]{Example}
\newcommand{\gr}{\operatorname{gr}}
\newcommand{\Spa}{\operatorname{Spa}}
\newcommand{\pr}{\operatorname{pr}}
\newcommand{\fm}{\frak{m}}
\newcommand{\fp}{\frak{p}}
\newcommand{\fq}{\frak{q}}
\newcommand{\plim}[1][]{\mathop{\varprojlim}\limits_{#1}}
\begin{document}
\title[Some ring-theoretic properties of rings via Frobenius and monoidal maps]
{Some ring-theoretic properties of rings via Frobenius and monoidal maps}

\author[K. Eto]{Kazufumi Eto}
\address{Department of Mathematics, Nippon Institute of Technology, Miyashiro, Saitama 345-8501, Japan}
\email{etou@nit.ac.jp.}

\author[J. Horiuchi]{Jun Horiuchi}
\address{Department of Mathematics, Nippon Institute of Technology, Miyashiro, Saitama 345-8501, Japan}
\email{jhoriuchi.math@gmail.com}

\author[K. Shimomoto]{Kazuma Shimomoto}
\address{Department of Mathematics, Tokyo Institute of Technology, 2-12-1 Ookayama, Meguro, Tokyo 152-8551, Japan}
\email{shimomotokazuma@gmail.com}

\thanks{2020 {\em Mathematics Subject Classification\/}: 13A18, 13B22, 13B35, 13G45}

\keywords{Complete integral closure, integral closure, monoidal map, perfectoid ring, valuation ring}


\begin{abstract}
The purpose of this note is to relate certain ring-theoretic properties of rings in mixed and positive characteristics that are related to each other by a tilting operation used in perfectoid geometry. To this aim, we exploit the multiplicative structure of the ``monoidal map", which is constructed on arbitrary $p$-adically complete rings.
\end{abstract}

\maketitle 
\tableofcontents

\section{Introduction}

Since perfectoid geometry was created by Scholze, Kedlaya-Liu and others, it has been a powerful tool for solving many outstanding problems in algebraic geometry, arithmetic geometry, and their neighboring areas. The foundation of perfectoid geometry is crucially built on exploiting the properties of a certain monoidal map. To continue the story in this introduction, let us introduce some important objects in our research. For any ring $A$ and a prime number $p>0$, we define the \textit{tilt} of $A$ as the limit
$$
A^{\flat}:=\plim[\rm{Frob}] \big\{\cdots \xrightarrow{\rm{Frob}} A/pA \xrightarrow{\rm{Frob}} A/pA\big\},
$$
where ${\rm{Frob}}:A/pA \to A/pA$ is the Frobenius map. Then $A^\flat$ is a ring of characteristic $p>0$ if $pA \ne A$. As already mentioned in the above, this plays a decisive role in deriving fundamental facts in perfectoid geometry. We will derive certain ring-theoretic properties of $A^\flat$ from $A$. To this aim, an important non-additive, but multiplicative map
\begin{equation}
\label{monoidmapconst}
\sharp:A^\flat \to A
\end{equation}
will be introduced for any $p$-adically complete ring $A$, through which one can transfer information from $A$ to $A^\flat$. This construction was originally introduced by Fontaine. Thus, let us call this map the \textit{monoidal map} (or \textit{Fontaine's sharp map}) throughout the present article. Before going further, we need the following condition.

\begin{enumerate}
\item[$(\rm{\bf{Perf}})$]: For an element $\varpi \in A$ and for any $n\geq 1$, there is a compatible system of $p$-power roots ${\varpi}^{\frac{1}{p^n}} \in A$.
\end{enumerate}

With this condition, we use the symbol ${\varpi}^{\flat}:=(\ldots,{\varpi}^{\frac{1}{p^2}}, {\varpi}^{\frac{1}{p}},\varpi)$ and it it easy to check that 
${\varpi}^{\flat}\in A^\flat$ from the definition. Let us recall the following result from \cite{Sch12}, which is fundamental in the geometry of perfectoid spaces.

\begin{theorem}[Kedlaya-Liu, Scholze]
\label{perfectoidspace}
Let $\mathcal{A}$ be a perfectoid algebra over a perfectoid field $K$ of characteristic $0$, let $\mathcal{A}^\circ$ be the set of powerbounded elements of $\mathcal{A}$ and let $A \subset \mathcal{A}^\circ$ be an open integrally closed subring. Moreover, denote by $A^\flat$ the tilt of $A$. Then the following statements hold:
\begin{enumerate}
\item
There is a topologically nilpotent element $\varpi \in K^\circ$ such that $p \in \varpi K^\circ$ and $\varpi^{\frac{1}{p^n}} \in K^\circ$ for all $n>0$, and $\mathcal{A}=A[\frac{1}{\varpi}]$. Set $\mathcal{A}^\flat:=A^{\flat}[\frac{1}{\varpi^\flat}]$ which is equipped with the structure as a complete Tate ring coming from that of $\mathcal{A}$.

\item
$\mathcal{A}^\circ$ (resp. $\mathcal{A}^{\flat\circ}$) is completely integrally closed in $\mathcal{A}$ (resp. in $\mathcal{A}^\flat$). Moreover, $A^{\flat}$ is an open integrally closed subring of $\mathcal{A}^{\flat\circ}$.

\item
$\mathcal{A}^\flat$ is a perfectoid ring of characteristic $p>0$ and $\mathcal{A}^{\circ\flat}=\mathcal{A}^{\flat\circ}$.

\item
There is a multiplicative map $\sharp:\mathcal{A}^\flat \to \mathcal{A}$ that induces a homeomorphism on adic spectra:
$$
\Spa(\mathcal{A},\mathcal{A}^\circ) \cong \Spa(\mathcal{A}^\flat,\mathcal{A}^{\circ\flat}).
$$
\end{enumerate}
\end{theorem}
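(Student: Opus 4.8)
The plan is to follow the tilting formalism, organizing the argument around the multiplicative identification of the tilt with an inverse limit under $p$-th powering and around a single approximation estimate that measures the failure of $\sharp$ to be additive. Throughout, ``Tate ring'' is in the sense of the statement, and I freely pass between $p$-adic and $\varpi$-adic topologies on $A$ once $\varpi$ has been chosen.

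First I would settle (1). Since $K$ is a perfectoid field of characteristic $0$, its value group is dense in $\mathbb{R}_{>0}$ and $|p|<1$, so one may pick a topologically nilpotent $\varpi_0\in K^\circ$ with $|p|\le|\varpi_0|<1$; replacing $\varpi_0$ by a suitable $p$-power root, which exists since Frobenius is surjective on $K^\circ/pK^\circ$, one arranges $|\varpi|^p\le|p|\le|\varpi|$, hence $p\in\varpi K^\circ$, and the same surjectivity produces a compatible system $\varpi^{1/p^n}\in K^\circ$, so $(\mathrm{\bf Perf})$ holds. Because $A$ is open in $\mathcal{A}^\circ$ and $\varpi$ is topologically nilpotent, $\{\varpi^nA\}_{n\ge 0}$ is a neighborhood basis of $0$, and every element of $\mathcal{A}$ is of the form $\varpi^{-N}a$ with $a\in A$; thus $\mathcal{A}=A[\tfrac1\varpi]$, and the complete Tate structure on $\mathcal{A}^\flat:=A^\flat[\tfrac1{\varpi^\flat}]$ is defined by transporting the norm along the dictionary established next.

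Next I would set up that dictionary and deduce (2) and (3). One shows the canonical comparison map $A^\flat=\plim[\mathrm{Frob}]A/pA\ \xrightarrow{\ \sim\ }\ \plim[x\mapsto x^p]A$, sending a system of residues to $(b_n)_n$ with $b_n=\lim_m\widetilde a_{\,n+m}^{\,p^m}$ for arbitrary lifts $\widetilde a_i\in A$, is a bijection (using $p$-adic completeness of $A$) and multiplicative; the monoidal map $\sharp\colon A^\flat\to A$ is then ``project to $b_0$,'' and inverting $\varpi^\flat$ gives $\sharp\colon\mathcal{A}^\flat\to\mathcal{A}$. From this one extracts the isomorphisms $A^\flat/\varpi^\flat A^\flat\cong A/\varpi A$ and $A^\flat/(\varpi^\flat)^{1/p^n}\cong A/\varpi^{1/p^n}$, the $\varpi^\flat$-adic completeness of $A^\flat$, and surjectivity of Frobenius on $A^\flat/\varpi^\flat$; together these give that $\mathcal{A}^\flat$ is perfectoid of characteristic $p$. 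For the equalities of subrings: in a uniform Tate ring the spectral seminorm $|\cdot|_{\sup}$ (the supremum over all continuous valuations) is power-multiplicative and cuts out $\mathcal{A}^\circ$, so if $cx^n\in\mathcal{A}^\circ$ for all $n$ then $|c|_{\sup}|x|_{\sup}^n\le 1$ for all $n$, forcing $|x|_{\sup}\le 1$ and hence $x\in\mathcal{A}^\circ$; this proves $\mathcal{A}^\circ$ (and, applied to $\mathcal{A}^\flat$, also $\mathcal{A}^{\flat\circ}$) is completely integrally closed. The identification $\mathcal{A}^{\circ\flat}=\mathcal{A}^{\flat\circ}$ follows because both compute $\plim[x\mapsto x^p]\mathcal{A}^\circ$, and ``$A^\flat$ open and integrally closed in $\mathcal{A}^{\flat\circ}$'' is transported from the same property of $A\subset\mathcal{A}^\circ$ through the mod-$\varpi^\flat$ dictionary, integral closedness being checked modulo $\varpi^\flat$ and lifted by completeness.

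Finally (4). The essential input is the approximation lemma: $\sharp\bmod\varpi$ is the ring isomorphism $A^\flat/\varpi^\flat\cong A/\varpi$ composed with reduction, so $(x+y)^\sharp\equiv x^\sharp+y^\sharp\pmod{\varpi}$ for $x,y\in A^\flat$, and applying this to $p$-power roots and raising to $p^n$-th powers (using $(a+b)^{p^n}\equiv a^{p^n}+b^{p^n}$ modulo a high power of $\varpi$) yields approximation of $(x+y)^\sharp$ by $x^\sharp+y^\sharp$ to arbitrary $\varpi$-adic precision. Granting this, for any continuous valuation $v$ on $\mathcal{A}$ the rule $v^\flat(x):=v(x^\sharp)$ is multiplicative by multiplicativity of $\sharp$, and the estimate forces $v^\flat(x+y)\le\max\{v^\flat(x),v^\flat(y)\}$, so $v^\flat$ is a continuous valuation on $\mathcal{A}^\flat$ lying in $\Spa(\mathcal{A}^\flat,\mathcal{A}^{\circ\flat})$; one then checks $v\mapsto v^\flat$ is a bijection onto $\Spa(\mathcal{A}^\flat,\mathcal{A}^{\circ\flat})$, with inverse built symmetrically via the untilting data, and that it carries rational subsets to rational subsets (because $\sharp$ has $\varpi$-adically dense image and is compatible, up to completion, with the localizations defining them), hence is a homeomorphism. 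I expect the approximation lemma together with this last verification to be the main obstacle: the additivity-up-to-$\varpi$ estimate must be proved carefully, and then the bijectivity of $v\mapsto v^\flat$ and the matching of rational subsets is the genuine content; by contrast, the perfectoidness of the tilt in (3), though substantial, is essentially bookkeeping once the multiplicative inverse-limit dictionary is in place.
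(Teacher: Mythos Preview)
The paper does not prove this theorem. It is stated in the introduction with the words ``Let us recall the following result from \cite{Sch12}, which is fundamental in the geometry of perfectoid spaces,'' and is used only as background motivating the paper's own Main Theorems~1 and~2; no proof or proof sketch is given anywhere in the body of the paper. So there is nothing to compare your proposal against.

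For what it is worth, your outline is a reasonable summary of how the result is actually established in Scholze's \emph{Perfectoid spaces}: the multiplicative identification $A^\flat\cong\plim[x\mapsto x^p]A$, the $\bmod\ \varpi$ dictionary, the approximation-to-additivity estimate for $\sharp$, and the construction $v\mapsto v^\flat$ on adic spectra are exactly the ingredients used there. The paper under review reproves only the first of these (its Lemma~\ref{Monoid lemma}) and then goes in a different direction, studying complete integral closedness and integral closedness of $A^\flat$ in $A^\flat[\tfrac1{\varpi^\flat}]$ for $p$-adically complete rings that are not assumed perfectoid. If your goal is a self-contained proof of Theorem~\ref{perfectoidspace}, you should consult \cite{Sch12} directly; the present paper is not the right reference.
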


The map $\sharp:\mathcal{A}^\flat \to \mathcal{A}$ is induced from $(\ref{monoidmapconst})$ in view of the facts that $\sharp(\varpi^\flat)=\varpi$ and $A[\frac{1}{\varpi}]=\mathcal{A}$ and $A^\flat[\frac{1}{\varpi^\flat}]=\mathcal{A}^\flat$. Let us explain the situation of the above theorem. First of all, notice that there is no obvious ring map connecting $A$ and $A^\flat$ directly. The element $\varpi \in A$ (resp. $\varpi^\flat \in A^{\flat}$) in Theorem \ref{perfectoidspace} gives a correspondence:

\begin{equation}
\label{specializationlift}
A \rightarrow A/\varpi A \cong A^{\flat}/\varpi^\flat A^{\flat} \leftarrow A^{\flat},
\end{equation}
where all maps are ring homomorphisms, and the right and left arrows are the natural quotient maps. Moreover, $A^{\flat}$ has positive characteristic. Quite generally, $A^\flat$ is known to be a perfect $\mathbb{F}_p$-algebra. It would be natural to consider the following problem.

\begin{Problem}
\label{integraltilting}
\begin{itemize}
\item
What kind of ring-theoretic properties on $A$ (resp. $A^\flat$) are carried over to $A^\flat$ (resp. $A$)?

\item
What kind of ring-theoretic properties can be derived via the monoidal map?
\end{itemize}
\end{Problem}

Our aim is to extend part of the content of Theorem \ref{perfectoidspace} to arbitrary $p$-adicaly complete rings that are not necessarily perfectoid. The following result derives complete integral closedness of $A^\flat$ from that of $A$.

\begin{Maintheorem}
\label{maincomplete1}
Fix a prime $p>0$ and let $A$ be a ring with a nonzero divisor $\varpi \in A$. Assume that $\varpi$ satisfies the condition $(\rm{\bf{Perf}})$. Then we have the following assertions:
\begin{enumerate}
\item
If $A$ is $p$-adically complete and completely integrally closed in $A[\frac{1}{\varpi}]$, then $A^{\flat}$ is also completely integrally closed in $ A^{\flat} [\frac{1}{\varpi^{\flat}}]$.

\item
If $A$ is completely integrally closed in $A[\frac{1}{\varpi}]$, and the $\varpi$-adic topology coincides with the $p$-adic topology on $A$, then $A^{\flat}$ is also completely integrally closed in $ A^{\flat} [\frac{1}{\varpi^{\flat}}]$.
\end{enumerate}
\end{Maintheorem}

A typical case where the first assertion of Main Theorem \ref{maincomplete1} applies is that $A=\widehat{B[t^{\frac{1}{p^\infty}}]}$ and $\varpi:=t$, where $B$ is some $p$-adic ring (such as the $p$-adic integer ring), $t$ is an indeterminate over $B$, and $\widehat{B[t^{\frac{1}{p^\infty}}]}$ is the $p$-adic completion of $B[t^{\frac{1}{p^\infty}}]$.\footnote{It is not clear whether $A$ is completely integrally closed in $A[\frac{1}{t}]$ or not, though we will not pursue this issue.} It is more subtle to prove integral closedness of $A^\flat$ in $A^\flat[\frac{1}{\varpi^\flat}]$, because the proof of Main Theorem \ref{maincomplete1} does not work directly. However, a trick using preuniromity introduced in \cite{NS19} can be used to deduce the following result.

\begin{Maintheorem}
\label{maincomplete2}
Let $A$ be a $p$-adically complete ring with a nonzero divisor $\varpi \in A$. Assume that $\varpi$ satisfies the condition $(\rm{\bf{Perf}})$, $p \in \varpi^p A$ and $A/pA$ is a semiperfect ring. If $A$ is integrally closed in $A[\frac{1}{\varpi}]$, then $A^{\flat}$ is also integrally closed in $ A^{\flat} [\frac{1}{\varpi^{\flat}}]$.
\end{Maintheorem}

Recall that $A/pA$ is \textit{semiperfect} if the Frobenius map on it is surjective. The authors are not sure if the assumption on semiperfectness can be dropped in Main Theorem \ref{maincomplete2}. We remark that the above results are essential in the construction of perfectoid almost Cohen-Macaulay algebras of some type in mixed characteristic. We refer the reader to \cite{IshizukaSh23} and \cite{NS23} for these topics.

$\textbf{Conventions}:$ In this article, all rings are assumed to be commutative with $1$. Rings are not necessarily assumed to be Noetherian. Because of this convention, we make a comment on the terminology of \textit{complete} rings/modules. We set $A$ as a ring and an ideal $I$ of $A$. We always assume that an $I$-adically complete module is always $I$-adically separated. In other words, we always have 
$$
\bigcap_{n\geq 0} I^{n} M = (0). 
$$
For generalities on the completion in the non-Noetherian setting, \cite[Tag 00M9]{Stacks} will be useful.

\section{Inverse perfection and the monoidal map}

Let us fix some notation. We write $p>0$ as a prime number.  First of all, we want to recall some definitions.

\begin{definition}
\label{completeintdef}
Let $A \to B$ be a ring extension. Then we say that an element $x \in B$ is \textit{almost integral} over $A$ if the $A$-submodule $\sum_{n=0}^\infty Ax^n \subset B$ is contained in a finitely generated $A$-submodule of $B$. We say that $A$ is \textit{completely integrally closed} in $B$ if every element of $B$ that is almost integral over $A$ belongs to $A$.
\end{definition}

Using this definition, the set of all elements in $B$ that are almost integral over $A$ forms a subring of $B$, which we call the \textit{complete integral closure} of $A$ in $B$. Denote this ring by $A_B^{\rm{ci}}$. We should be cautious about complete integral closure. It is not necessarily true that the complete integral closure $A_B^{\rm{ci}}$ is completely integrally closed in $B$. This observation was first made by Krull (see \cite{He69} for a modern treatment). Notice that when $A$ is Noetherian, then the notion of ``almost integral" coincides with the notion of ``integral". However, if we specialize to the case where $B=A[\frac{1}{t}]$ for a nonzero divisor $t \in A$, the situation is well-controlled. An element $x \in A[\frac{1}{t}]$ is almost integral over $A$ if there is some $c \in \mathbb{N}$ such that $t^cx^n \in A$ for all $n \ge 0$. Moreover, the complete integral closure of $A$ in $A[\frac{1}{t}]$ is completely integrally closed in $A[\frac{1}{t}]$ (see Corollary \ref{algcompleteint}).

Let $A$ be an $\mathbb{F}_p$-algebra. We say that $A$ is \textit{perfect} if the Frobenius map $F$ on $A$ is bijective. The following is immediate.

\begin{lemma}
\label{perfect.reduced}
Let $A$ be an $\mathbb{F}_p$-algebra. If $A$ is perfect, then $A$ is reduced.
\end{lemma}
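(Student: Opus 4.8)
The plan is to prove that a perfect $\mathbb{F}_p$-algebra $A$ is reduced by showing directly that the only nilpotent element is $0$. Suppose $x \in A$ satisfies $x^n = 0$ for some $n \geq 1$. I would first reduce to the case where the nilpotency order is a power of $p$: choose $e \geq 0$ with $p^e \geq n$, so that $x^{p^e} = 0$ as well. This is just the observation that $x^{p^e} = x^{p^e - n} \cdot x^n = 0$.

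Next I would invoke bijectivity of the Frobenius map $F(a) = a^p$. Since $x^{p^e} = F^e(x) = 0$ and $F^e$ is injective (being a composition of the bijective map $F$ with itself $e$ times), it follows immediately that $x = 0$. Hence $A$ has no nonzero nilpotents, i.e. $A$ is reduced.

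It is worth noting which hypothesis actually does the work: only \emph{injectivity} of Frobenius is used, so the statement holds more generally for any $\mathbb{F}_p$-algebra on which $F$ is injective. There is no real obstacle here — the only point requiring a moment's care is the passage from an arbitrary nilpotency exponent $n$ to a $p$-power exponent $p^e$, and the bookkeeping that $F^e(x) = x^{p^e}$, which follows from $F(ab) = F(a)F(b)$ and induction on $e$. Everything else is a one-line application of the definition of \emph{perfect}.
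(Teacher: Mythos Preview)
Your proof is correct and follows essentially the same approach as the paper: pick a $p$-power exponent dominating the nilpotency order, rewrite $x^{p^e}=0$ as $F^e(x)=0$, and conclude $x=0$ from injectivity of Frobenius. Your additional remark that only injectivity of $F$ is needed is accurate and worth keeping.
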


\begin{proof}
Suppose the contrary. Let $x \in A$ be such that $x^m=0$ for some $m > 0$. Choose $k>0$ such that $m \le p^k$. Then we have $x^{p^k}=0$. Since $F^k(x)=x^{p^k}$ and the Frobenius map is injective on $A$, we have $x=0$, as desired.
\end{proof}

We have the following procedure to get the perfect ring.

\begin{definition}
Let $A$ be a ring. Then we define the \textit{tilt} (or \textit{inverse perfection} if $pA=0$) $A^{\flat}$ of $A$ as 
$$
A^{\flat}:=\plim[\rm{Frob}] \big\{\cdots \xrightarrow{\rm{Frob}} A/pA \xrightarrow{\rm{Frob}} A/pA\big\},
$$
where the transition map is the Frobenius map on $A/pA$.
\end{definition}

We write
$$
(a_n)_{n\geq 0}=(\ldots,a_1,a_0) \in A^\flat
$$
to ease notation.
One can check that $A^{\flat}$ is an $\mathbb{F}_p$-algebra with the zero $(\ldots,0,0)$ and the identity $(\ldots,1,1)$. We also notice that $A^{\flat}$ admits the natural ring map $\pr_0:A^{\flat} \to A/pA$ into its first ($0$-th) component.

\begin{lemma}
\label{PrimeFundLemma}
Let $p>0$ be a prime number and let $A$ be a ring. Assume that $t \in A$ satisfies $p \in tA$. Then for any $a,b \in A$ such that $a-b \in tA$, we have $a^{p^n}-b^{p^n} \in t^{n+1}A$.
\end{lemma}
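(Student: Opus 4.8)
The plan is to argue by induction on $n$. The case $n=0$ is precisely the hypothesis $a-b\in tA=t^{1}A$. For the inductive step, assume $a^{p^n}-b^{p^n}\in t^{n+1}A$, and set $x:=a^{p^n}$ and $y:=b^{p^n}$, so that $x-y\in t^{n+1}A$. Writing $x=y+(x-y)$ and applying the binomial theorem gives
\[
a^{p^{n+1}}-b^{p^{n+1}}=x^{p}-y^{p}=\sum_{k=1}^{p}\binom{p}{k}(x-y)^{k}y^{p-k},
\]
and I would then estimate the $t$-adic order of each summand separately, distinguishing the term $k=1$ from the terms $k\ge 2$.

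For $k=1$ the contribution is $p(x-y)y^{p-1}$; since $p\in tA$ and $x-y\in t^{n+1}A$, this term lies in $t\cdot t^{n+1}A=t^{n+2}A$. For $2\le k\le p$ we have $(x-y)^{k}\in t^{k(n+1)}A$, and because $n\ge 0$ the exponent satisfies $k(n+1)\ge 2(n+1)=2n+2\ge n+2$, so each of these summands also lies in $t^{n+2}A$. Summing, $a^{p^{n+1}}-b^{p^{n+1}}=x^{p}-y^{p}\in t^{n+2}A$, which completes the induction and hence the proof.

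The argument is essentially bookkeeping of $t$-divisibilities, so I do not expect a serious obstacle. The only delicate point is the term $k=1$ in the binomial expansion: on its own it only carries the factor $t^{n+1}$ coming from $x-y$, and it is exactly here that the hypothesis $p\in tA$ must be invoked to promote it into $t^{n+2}A$. Once that term is accounted for, the remaining terms $k\ge 2$ are automatically harmless, since $(x-y)^{k}$ already lies in a power of $t$ at least $t^{n+2}$.
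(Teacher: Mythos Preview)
Your proof is correct and is exactly the standard induction-plus-binomial argument one would expect here. The paper itself does not give a proof of this lemma---it simply declares it ``an easy exercise''---so your write-up fills in precisely what the authors left to the reader, and there is nothing further to compare.
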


\begin{proof}
This is an easy exercise.
\end{proof}

The next lemma is due to Fontaine. As we are unable to find references adequate for commutative algebraists, we decided give a proof.

\begin{lemma}[Monoid lemma]
\label{Monoid lemma}
Fix a prime number $p>0$. Assume that $A$ is a $p$-adically complete ring. Then there is an  isomorphism of multiplicative monoids
\begin{equation}
\label{monoidiso1}
\plim[x \mapsto x^p]A \cong A^{{\flat}},
\end{equation}
where the left side is the limit of the inverse system with each transition map defined by $x \mapsto x^p$, and the following properties are satisfied.

\begin{enumerate}
\item
$\plim[x \mapsto x^p]A$ can be equipped with a ring structure from $A^{\flat}$ via the monoidal isomorphism $(\ref{monoidiso1})$, where the addition is given by
\begin{equation}
\label{tiltaddition}
(a_n)_{n\geq 0}+(b_n)_{n\geq 0}=\Big(\lim_{m \to \infty} {(a_{n+m}+b_{n+m})}^{p^m}\Big)_{n\geq 0}
\end{equation}
and the multiplication is given by
\begin{equation}
\label{tiltproduct}
(a_n)_{n\geq 0}~(b_n)_{n\geq 0}=(a_n b_n)_{n\geq 0}.
\end{equation}

\item
Let $\plim[x \mapsto x^p]A$ and $A^{\flat}$ be equipped with the inverse limit topology, respectively, where $A$ has the $p$-adic topology and $A/pA$ has the discrete topology. Then $(\ref{monoidiso1})$ is an isomorphism of topological rings with respect to these topologies. 
\end{enumerate}
\end{lemma}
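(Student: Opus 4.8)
The plan is to exhibit the isomorphism \eqref{monoidiso1} concretely and then read off everything else from the explicit formula for its inverse. Define $\Phi\colon \plim[x\mapsto x^p]A \to A^{\flat}$ by sending a sequence $(a_n)_{n\geq 0}$ with $a_{n+1}^p = a_n$ in $A$ to $(a_n \bmod p)_{n\geq 0}$; since $(a_{n+1}\bmod p)^p = a_n \bmod p$ this lands in $A^{\flat}$, and it is plainly a homomorphism of multiplicative monoids. Injectivity is immediate: if $a_n \in pA$ for all $n$, then writing $a_n = a_{n+k}^{p^k}$ with $a_{n+k}\in pA$ shows $a_n \in p^{p^k}A$ for every $k$, whence $a_n = 0$ because $A$ is $p$-adically separated.

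The substance is surjectivity. Given $(\bar a_n)_{n\geq 0} \in A^{\flat}$, choose arbitrary lifts $b_n \in A$ of $\bar a_n$, so that $b_{n+1}^p \equiv b_n \pmod{p}$. Lemma \ref{PrimeFundLemma} (applied with $t = p$) then gives $b_{n+m+1}^{p^{m+1}} \equiv b_{n+m}^{p^m} \pmod{p^{m+1}}$, so the sequence $\big(b_{n+m}^{p^m}\big)_{m\geq 0}$ is $p$-adically Cauchy; by completeness it converges, and I set $a_n := \lim_{m\to\infty} b_{n+m}^{p^m}$. Using Lemma \ref{PrimeFundLemma} again one checks that $a_n$ does not depend on the chosen lifts, that $a_n \equiv \bar a_n \pmod{p}$ (the partial sums of the telescoping series $a_n - b_n = \sum_m (b_{n+m+1}^{p^{m+1}} - b_{n+m}^{p^m})$ all lie in the open, hence closed, subgroup $pA$), and, since raising to the $p$-th power is $p$-adically continuous, that $a_{n+1}^p = \lim_m b_{n+m+1}^{p^{m+1}} = a_n$. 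Hence $(a_n)_n \in \plim[x\mapsto x^p]A$ and $\Phi\big((a_n)_n\big) = (\bar a_n)_n$, so $\Phi$ is a monoid isomorphism whose inverse is $\Psi\colon (\bar a_n)_n \mapsto \big(\lim_m \tilde a_{n+m}^{p^m}\big)_n$ for any system of lifts $\tilde a_k \in A$ of $\bar a_k$.

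Part (1) is then bookkeeping with $\Psi$. Transporting the product of $A^{\flat}$ and representing the sequence $(\bar a_n \bar b_n)_n$ by the lifts $a_n b_n$ gives $(a_n)_n(b_n)_n = (a_nb_n)_n$, which is \eqref{tiltproduct}; transporting the sum and representing $(\bar a_n + \bar b_n)_n$ by the lifts $a_n + b_n$ — which again form a system of lifts compatible modulo $p$, since $a_{n+1}^p = a_n$ and $b_{n+1}^p = b_n$ — gives exactly \eqref{tiltaddition}. For part (2), $\Phi$ is continuous because each component map factors as $\plim[x\mapsto x^p]A \to A \to A/pA$, a projection followed by reduction modulo $p$, and reduction is continuous from the $p$-adic topology to the discrete topology. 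To see that $\Psi$ is continuous it suffices to check, for each $k$ and each $j\geq 1$, that $(\bar a_n)_n \mapsto a_k \bmod p^j \in A/p^jA$ is continuous into the discrete target; but $a_k \equiv b_{k+j-1}^{p^{j-1}} \pmod{p^j}$, and by Lemma \ref{PrimeFundLemma} the class $b_{k+j-1}^{p^{j-1}} \bmod p^j$ depends only on $\bar a_{k+j-1}$, so this map factors through the projection $A^{\flat} \to A/pA$ onto the $(k+j-1)$-st factor and is continuous.

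The main obstacle is the surjectivity step, i.e.\ proving that the limit defining $a_n$ exists and satisfies the three properties above; this is precisely where $p$-adic completeness (existence of the limit), $p$-adic separatedness (independence of lifts and injectivity of $\Phi$), and the uniform congruence estimate of Lemma \ref{PrimeFundLemma} (which drives every convergence argument, including the continuity of $\Psi$) come into play. Once $\Phi$ and its inverse are in hand, the statements about the ring structure and the topology are essentially formal.
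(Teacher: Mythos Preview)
Your approach is the same as the paper's: define $\Phi$ by reduction modulo $p$, prove surjectivity by showing via Lemma~\ref{PrimeFundLemma} that $(b_{n+m}^{p^m})_m$ is $p$-adically Cauchy for any system of lifts $b_n$, and then read off the addition and multiplication formulas from the explicit inverse $\Psi$. Your treatment of part~(2) is more detailed than the paper's, which simply says it follows from the construction.

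There is one slip. Your injectivity argument shows that if $a_n\in pA$ for all $n$ then $a_n=a_{n+k}^{p^k}\in p^{p^k}A$ for all $k$, hence $a_n=0$; but this only establishes $\Phi^{-1}(0)=\{0\}$. Since $\Phi$ is at this stage merely a map of \emph{multiplicative} monoids, not an additive homomorphism, that does not yield injectivity. The paper instead takes two sequences $(a_n),(b_n)$ with $a_n\equiv b_n\pmod p$ and applies Lemma~\ref{PrimeFundLemma} to get $a_n-b_n=a_{n+k}^{p^k}-b_{n+k}^{p^k}\in p^{k+1}A$ for every $k$. Alternatively, your own independence-of-lifts claim for $\Psi$ already repairs this: choosing the lifts $\tilde a_n:=a_n$ gives $\Psi\circ\Phi=\mathrm{id}$, whence $\Phi$ is injective. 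So the gap is easily closed, but the ``immediate'' argument as written is incomplete.
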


\begin{proof}
Let us construct the desired multiplicative map as follows. First of all, we have the commutative diagram of multiplicative monoids. 
$$
\begin{CD}
A @>>> A/pA \\
@VVV @VVV \\
A @>>> A/pA \\
\end{CD}
$$
Here, each horizontal map is the natural surjection and both vertical maps are given as the $p$-th power map. By taking inverse limits of these systems, we obtain the desired map $\plim[x \mapsto x^p]A \to A^{{\flat}}$. In order to prove that it is bijective, pick elements $(a_n)_{n\geq 0},(b_n)_{n\geq 0} \in \plim[x \mapsto x^p] A$ such that $a_n-b_n \in pA$ for all $n \ge 0$. Since $a_{n+k}^{p^k}=a_n$, $b_{n+k}^{p^k}=b_n$, and $a_{n+k}-b_{n+k} \in pA$, we obtain $a_n-b_n \in p^{k+1}A$ for any $k \ge 0$, which is independent of $n$ by Lemma \ref{PrimeFundLemma}. Since $A$ is $p$-adically separated, it follows that $a_n=b_n$ in $A$, which shows that $\plim[x \mapsto x^p] A \to A^{\flat}$ is injective.

Next, let us prove that $\plim[x \mapsto x^p] A \to A^{\flat}$ is surjective. Pick an element $(\overline{a_n})_{n\geq 0} \in A^\flat$. For any fixed $n \ge 0$, we write $a_n \in A$ as a lift of $\overline{a_n} \in A/pA$. Notice that $a_{n+k+1}^p-a_{n+k} \in pA$ and then this implies that the sequence
\begin{equation}
\label{Cauchyseq}
\{a_{n+k}^{p^k}\}_{k\geq 0}
\end{equation}
is Cauchy with respect to the $p$-adic topology, as follows from Lemma \ref{PrimeFundLemma}. Since $A$ is $p$-adically complete and separated, $\{a_{n+k}^{p^k}\}_{k\geq 0}$ admits a unique limit $b_n \in A$. In other words,
$$
(\ldots,b_1,b_0)=\Big(\ldots,\lim_{k \to \infty} a_{1+k}^{p^k},\lim_{k \to \infty} a_{0+k}^{p^k}\Big).
$$
It is easy to check that $(b_n)_{n\geq 0}$ gives an element of $\plim[x \mapsto x^p]A$, which is mapped to $(\overline{a_n})_{n\geq 0} \in A^\flat$. 

$(1)$: Let us verify addition and multiplication formula $(\ref{tiltaddition})$ and $(\ref{tiltproduct})$, respectively. Fix elements $(a_n)_{n\geq 0}, (b_n)_{n\geq 0} \in \plim[x \mapsto x^p]A$. Then the multiplication formula is obvious, so we prove the addition formula. Notice that every $a_n+b_n$ maps to $\overline{a_n}+\overline{b_n}$ in $A/pA$. While $(\overline{a_n}+\overline{b_n})_{n\geq 0}$ defines an element in $A^\flat$, it is not true that $(a_{n+1}+b_{n+1})^p=a_n+b_n$. In other words, $(a_n+b_n)_{n\geq 0}$ does not define an element of $\plim[x \mapsto x^p]A$. Therefore, it is necessary to modify the addition. But $(\ref{Cauchyseq})$ already shows that 
$$
\Big(\lim_{m \to \infty} {(a_{n+m}+b_{n+m})}^{p^m}\Big)_{n\geq 0}
$$
maps to $(\overline{a_n}+\overline{b_n})_{n\geq 0} \in A^\flat$. So we obtain the addition formula.

$(2)$: This follows from the construction of the monoid isomorphism $(\ref{monoidiso1})$.
\end{proof}

\begin{proposition}
Let $A$ be a ring. Let $p>0$ be a prime number and assume $A \ne pA$. Then the following assertions hold.
\begin{enumerate}
\item
$A^\flat$ is a perfect $\mathbb{F}_p$-algebra. In particular, it is always reduced.

\item
Let $A$ be a $p$-adically complete ring. Assume that $A$ is an integral domain. Then $A^{\flat}$ is also an integral domain.
\end{enumerate}
\end{proposition}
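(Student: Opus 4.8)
The plan for part (1) is to check by hand that the Frobenius endomorphism of $A^{\flat}$ is bijective. We already know $A^{\flat}$ is an $\mathbb{F}_p$-algebra (it is the inverse limit of the $\mathbb{F}_p$-algebras $A/pA$ along the ring homomorphism $\mathrm{Frob}$, with zero $(\ldots,0,0)$ and identity $(\ldots,1,1)$). Writing a general element as $(a_n)_{n\geq 0}$ with $a_n\in A/pA$ and $a_{n+1}^{p}=a_n$, the ring Frobenius of $A^{\flat}$ is $F\big((a_n)_{n\geq 0}\big)=(a_n^{p})_{n\geq 0}$, which again lies in $A^{\flat}$. For injectivity: if $(a_n^{p})_{n\geq 0}=0$ then $a_n^{p}=0$ for all $n$, and since $a_{n+1}^{p}=a_n$ this forces $a_n=0$ for every $n$. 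For surjectivity: given $(c_n)_{n\geq 0}\in A^{\flat}$, the shifted sequence $(c_{n+1})_{n\geq 0}$ again lies in $A^{\flat}$ and $F$ sends it to $(c_{n+1}^{p})_{n\geq 0}=(c_n)_{n\geq 0}$. Hence $F$ is bijective, so $A^{\flat}$ is perfect, and reducedness follows at once from Lemma \ref{perfect.reduced}.

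For part (2), the first observation is that $A^{\flat}\neq 0$: since $A\neq pA$ the ring $A/pA$ is nonzero, so the identity $(\ldots,1,1)$ differs from the zero $(\ldots,0,0)$ in $A^{\flat}$. The key move is to avoid arguing componentwise inside $A/pA$ and instead transport the zero-divisor question to $\plim[x\mapsto x^p]A$ via the Monoid lemma (Lemma \ref{Monoid lemma}), which applies because $A$ is $p$-adically complete; under that (ring) isomorphism $0$ and $1$ correspond to the constant sequences, and multiplication becomes componentwise in $A$ itself. So suppose $(a_n)_{n\geq 0}$ and $(b_n)_{n\geq 0}$ in $\plim[x\mapsto x^p]A$ satisfy $a_n b_n=0$ in $A$ for all $n$, and assume $(a_n)_{n\geq 0}\neq 0$, say $a_{n_0}\neq 0$. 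Using the relation $a_{n+1}^{p}=a_n$ together with the fact that $A$ is a domain, one gets $a_n\neq 0$ for every $n$: going down, $a_{n-1}=a_n^{p}\neq 0$; going up, $a_{n+1}=0$ would give $a_n=a_{n+1}^{p}=0$, a contradiction. Then $a_nb_n=0$ in the domain $A$ forces $b_n=0$ for all $n$, i.e. $(b_n)_{n\geq 0}=0$. Thus $A^{\flat}$ is a nonzero ring without nonzero zero-divisors, hence an integral domain.

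The only genuinely delicate point is the one just flagged: the multiplication on $A^{\flat}$ is, by definition, componentwise in $A/pA$, and $A/pA$ can fail even to be reduced when $A$ is a domain (for instance $A=\mathbb{Z}_p[x]/(x^{2}-p)$), so the naive ``$A$ domain $\Rightarrow$ $A^{\flat}$ domain'' computation does not go through directly. The Monoid lemma is precisely the device that rephrases the question in terms of the honest domain $A$ — and it is exactly here that $p$-adic completeness and separatedness of $A$ are used — after which the zero-divisor argument is routine. Everything else (the Frobenius bijectivity in part (1), nonvanishing of $A^{\flat}$) is elementary bookkeeping with the limit.
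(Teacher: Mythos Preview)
Your proof is correct and follows essentially the same approach as the paper: for (1) you verify bijectivity of Frobenius on $A^{\flat}$ directly (your kernel computation is equivalent to the paper's contrapositive), and for (2) you invoke the Monoid lemma to move the zero-divisor question into $\plim[x\mapsto x^p]A$ and exploit that $A$ is a domain. The only cosmetic difference is that the paper argues from the $0$-th component alone ($a_0b_0=0$, then $a_m^{p^m}=a_0=0$ plus reducedness of $A$), whereas you show all $a_n\neq 0$ first; your explicit check that $A^{\flat}\neq 0$ and your remark on why $A/pA$ alone does not suffice are welcome additions.
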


\begin{proof}
$(1)$: The second assertion follows from Lemma \ref{perfect.reduced}. We show that $A^\flat$ is a perfect $\mathbb{F}_p$-algebra. We write $(\overline{a_n})_{n\geq 0}=(\ldots, \overline{a_2}, \overline{a_1}, \overline{a_0}) \in A^\flat$, where $a_m \in A $ is a lift of $\overline{a_m} \in A/pA$ for all $m\geq 0$.
Let $F: A^{\flat} \to A^{\flat}$ be the Frobenius map on $A^{\flat}$, sending $(\overline{a_n})_{n\geq 0}=(\ldots,\overline{a_2}, \overline{a_1}, \overline{a_0}) \in A^\flat$ to $(\overline{{a_n}^p})_{n\geq 0}=(\ldots, \overline{{a_2}^p},\overline{{a_1}^p}, \overline{{a_0}^p}) \in A^\flat$. We prove that $F$ is bijective. Pick elements $(\overline{a_n})_{n\geq 0},(\overline{b_n})_{n\geq 0} \in A^{\flat}$ such that $(\overline{a_n})_{n\geq 0}\neq(\overline{b_n})_{n\geq 0}$. Thus, there exists at least one number $m$ such that $\overline{a_m} \neq \overline{b_m}$. Since $\overline{a_m}=\overline{{a_{m+1}}^p}$ and $\overline{b_m}=\overline{{b_{m+1}}^p}$, we have $\overline{{a_{m+1}}^p}  \neq \overline{{b_{m+1}}^p}$. Thus we get $F((\overline{a_n})_{n\geq 0})=(\ldots,\overline{{a_{m+1}}^p}, \overline{{a_m}^p},\ldots)$ and $F((\overline{b_n})_{n\geq 0})=(\ldots,\overline{{b_{m+1}}^p}, \overline{{b_m}^p}, \ldots)$. We conclude that $F((\overline {a_n})_{n\geq 0}) \neq F((\overline {b_n})_{n\geq 0})$. This proves the injectivity. To prove the surjectivity, fix an element $(\overline{a_n})_{n\geq 0}=(\ldots,\overline{a_2}, \overline{a_1}, \overline{a_0}) \in A^\flat$. Then $(\ldots,\overline{a_3}, \overline{a_2}, \overline{a_1})$ defines an element of $A^\flat$. We can check that $F((\ldots,\overline{a_3},\overline{a_2}, \overline{a_1}))=(\overline{a_n})_{n\geq 0}$, which proves the surjectivity.

$(2)$: Assume that $a, b \in A^\flat$ satisfy $ab=0$. The isomorphism $A^{\flat} \cong \plim[x \mapsto x^p]A$ allows us to write $a=(\ldots,a_2,a_1,a_0), b=(\ldots,b_2, b_1,b_0) \in A^\flat$, where $a_m,b_m \in A $ for all $m\geq 0$. Since $a_0b_0=0$ and $A$ is an integral domain, it follows that $a_0=0$ or $b_0=0$ holds. Assume that we have $a_0=0$. Since $a_m^{p^m}=a_0=0$ and $A$ is reduced, we have $a_i=0$ for all $i \ge 0$ giving $a=0$. If instead we assume $b_0=0$, we will get $b=0$, as desired.
\end{proof}

We discuss the key notion for the present article.

\begin{definition}[Monoidal map]
\label{Monoidalmap1}
Let $A$ be a $p$-adically complete ring. We define the \textit{monoidal map} $\sharp:A^{\flat} \to A$ as the composite map:
$$
\sharp :A^{\flat} \to \plim[x \mapsto x^p]A \xrightarrow{\pr_0}A,
$$
where the first map is the inverse of the isomorphism from Lemma \ref{Monoid lemma} and the the second map is the projection into the $0$-th component. Then we can check that $\sharp:A^{\flat} \to A$ is a continuous map with respect to the inverse limit topologies.
\end{definition}

From the construction of $\sharp :A^{\flat} \to A$, the following lemma is immediate.

\begin{lemma}
\label{WittEmbedding1}
Let $A$ be a $p$-adically complete ring. 
\begin{enumerate}
\item
$\sharp :A^{\flat} \to A$ is injective if and only if for any pair of compatible systems of $p$-power roots $(a_n)_{n \ge 0}$ and $(b_n)_{n \ge 0}$ such that $a_n,b_n \in A$ and $a_0=b_0$, we have $a_n=b_n$ for all $n \ge 0$.

\item
$\sharp :A^{\flat} \to A$ is surjective if and only if for any $x \in A$, we can find $y \in A$ such that $y^p=x$.
\end{enumerate}
\end{lemma}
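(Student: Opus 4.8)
The plan is to reduce both equivalences to the description of $A^{\flat}$ furnished by Lemma \ref{Monoid lemma}. Since $A$ is $p$-adically complete, that lemma identifies $A^{\flat}$ with the multiplicative inverse limit $\plim[x \mapsto x^p]A$, and by Definition \ref{Monoidalmap1} the map $\sharp$ becomes, under this identification, nothing but the projection $\pr_0 \colon \plim[x \mapsto x^p]A \to A$ onto the $0$-th component. An element of $\plim[x \mapsto x^p]A$ is by definition a sequence $(a_n)_{n \ge 0}$ with $a_n \in A$ and $a_{n+1}^p = a_n$ for all $n$, i.e.\ precisely a compatible system of $p$-power roots of $a_0$ lying in $A$. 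So I would first make this dictionary explicit, after which each statement is a formal property of $\pr_0$.

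For the first assertion, I would use that two elements of an inverse limit agree if and only if they agree in every component. Hence $\sharp$, viewed as $\pr_0$, is injective exactly when the equality of $0$-th components $a_0 = b_0$ forces $a_n = b_n$ for all $n$; since the elements here are exactly the compatible systems of $p$-power roots in $A$, this is verbatim the stated condition. For the second assertion, if $\sharp$ is surjective then, given $x \in A$, I pick a preimage $(a_n)_{n \ge 0}$ with $a_0 = x$, and $y := a_1$ satisfies $y^p = a_1^p = a_0 = x$. Conversely, if every element of $A$ has a $p$-th root in $A$, then starting from $a_0 := x$ and choosing inductively a $p$-th root $a_{n+1}$ of $a_n$ produces an element of $\plim[x \mapsto x^p]A$ mapping to $x$ under $\pr_0$, so $\sharp$ is surjective.

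I do not expect a genuine obstacle here: both directions are immediate once the identification $A^{\flat} \cong \plim[x \mapsto x^p]A$ and $\sharp = \pr_0$ is in place, and the hypothesis of $p$-adic completeness enters only through the invocation of Lemma \ref{Monoid lemma}. The one point worth stating with a little care is the inductive construction of a compatible system of $p$-power roots in the surjectivity direction, where one selects the roots $a_{n+1}$ step by step; everything else is bookkeeping with the inverse limit.
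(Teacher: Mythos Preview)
Your proposal is correct and matches the paper's approach exactly: the paper simply states that the lemma is immediate from the construction of $\sharp$, and what you have written is precisely the unpacking of that remark via the identification $A^{\flat}\cong \plim[x \mapsto x^p]A$ and $\sharp=\pr_0$ from Lemma \ref{Monoid lemma} and Definition \ref{Monoidalmap1}.
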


For a certain $p$-torsion free ring, one can control $p$-th roots of a given element in a ring.

\begin{corollary}
\label{perfectTeich}
Assume that $A$ is a $p$-adically complete and $p$-torsion free ring such that $A/pA$ is a perfect $\mathbb{F}_p$-algebra. Then the equation $t^p-a=0$ has exactly one root in $\sharp(A^\flat)$ for any given element $a \in \sharp(A^\flat)$.
\end{corollary}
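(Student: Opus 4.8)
The plan is to split into existence and uniqueness, the two workhorses being the multiplicativity of $\sharp$ combined with the perfectness of $A^\flat$ (proved above), and the injectivity of $\sharp$. So I would first verify that $\sharp\colon A^\flat\to A$ is injective under the present hypotheses. By Lemma \ref{WittEmbedding1}(1) this amounts to showing that a compatible system of $p$-power roots in $A$ is determined by its $0$-th component. Given two such systems $(a_n)_{n\ge 0}$, $(b_n)_{n\ge 0}$ with $a_0=b_0$, I would peel off componentwise: from $\overline{a_{n+1}}^{\,p}=\overline{a_n}$ in $A/pA$ and the injectivity of the Frobenius on the perfect ring $A/pA$, an easy induction gives $a_n\equiv b_n\pmod{pA}$ for all $n$. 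Then, for fixed $n$ and arbitrary $k\ge 0$, Lemma \ref{PrimeFundLemma} applied to $a_{n+k}\equiv b_{n+k}\pmod{pA}$ yields $a_n=a_{n+k}^{p^k}\equiv b_{n+k}^{p^k}=b_n\pmod{p^{k+1}A}$, and $p$-adic separatedness finishes it. (One could also argue more structurally: since $A/pA$ is perfect the Frobenius transition maps of $A^\flat=\plim[\mathrm{Frob}]A/pA$ are bijective, so $\pr_0\colon A^\flat\to A/pA$ is an isomorphism and $\sharp$ becomes a multiplicative section of the reduction $A\to A/pA$, hence injective.)

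For existence I would write $a=\sharp(\widetilde a)$ with $\widetilde a\in A^\flat$, use that $A^\flat$ is a perfect $\mathbb{F}_p$-algebra to pick $\widetilde b\in A^\flat$ with $\widetilde b^{\,p}=\widetilde a$, and set $t:=\sharp(\widetilde b)\in\sharp(A^\flat)$; multiplicativity of $\sharp$ then gives $t^p=\sharp(\widetilde b)^p=\sharp(\widetilde b^{\,p})=\sharp(\widetilde a)=a$. (Equivalently, if $a=a_0$ for a compatible system $(a_n)_{n\ge 0}$ in $A$, the element $t:=a_1$ already does the job, with $(a_{n+1})_{n\ge 0}$ certifying $a_1\in\sharp(A^\flat)$.) For uniqueness, given $t,s\in\sharp(A^\flat)$ with $t^p=s^p=a$, I would write $t=\sharp(\widetilde t)$, $s=\sharp(\widetilde s)$, note $\sharp(\widetilde t^{\,p})=t^p=s^p=\sharp(\widetilde s^{\,p})$, invoke the injectivity of $\sharp$ to get $\widetilde t^{\,p}=\widetilde s^{\,p}$ in $A^\flat$, and then use injectivity of the Frobenius on the perfect ring $A^\flat$ to conclude $\widetilde t=\widetilde s$, hence $t=s$.

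The step I expect to be the main obstacle is the injectivity of $\sharp$, specifically the bootstrapping from the mod-$p$ equalities $a_n\equiv b_n$ to genuine equalities $a_n=b_n$; this is precisely where $p$-adic completeness (via separatedness and Lemma \ref{PrimeFundLemma}) is indispensable, while perfectness of $A/pA$ is what delivers the mod-$p$ equalities to begin with. The $p$-torsion-freeness of $A$ is not used in the argument sketched above; it is recorded because the natural habitat of the statement is a $p$-torsion-free, $p$-adically complete ring with perfect reduction, where $\sharp$ furnishes the multiplicative Teichm\"uller-type lift $A/pA\to A$ and $\sharp(A^\flat)$ is its image.
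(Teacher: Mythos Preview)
Your proof is correct and essentially matches the paper's approach: the paper also reduces everything to the injectivity of $\sharp$, which it establishes by exactly your parenthetical ``structural'' argument (perfectness of $A/pA$ makes $\pr_0\colon A^\flat\to A/pA$ an isomorphism, so $\pi\circ\sharp$ is the identity and $\sharp$ is injective), and then invokes Lemma~\ref{WittEmbedding1} tersely where you spell out existence and uniqueness by hand. Your observation that $p$-torsion-freeness is not actually used in the argument is also correct.
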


\begin{proof}
By Definition \ref{Monoidalmap1}, we have 
$\sharp :A^{\flat} \to \plim[x \mapsto x^p]A \xrightarrow{\pr_0}A$. By composing this with the natural quotient map $\pi: A \to A/pA$, we get $\pi \circ \sharp:A^\flat \to A/pA$. Since the Frobenius map on $A/pA$ is bijective, we get that $A^\flat \cong A/pA$ and hence, $\pi \circ \sharp$ coincides with the identity map on $A^\flat$. In particular, $\sharp$ is injective and it follows from Lemma \ref{WittEmbedding1} that the equation $t^p-a=0$ has a unique root in $\sharp(A^\flat)$.
\end{proof}

\begin{remark}
\label{WittEmbedding2}
\begin{enumerate}
\item
In the setting of Corollary \ref{perfectTeich}, it is not true that $x^p-a=0$ admits only one root in $A$ (rather than in $\sharp(A^\flat)$), even if $a \in \sharp(A^\flat)$. Let $p=2$ and consider $\mathbb{Z}_2$. Then the equation $t^2-1=0$ obviously has two roots $\pm 1$. However, $-1 \in \mathbb{Z}_2$ is not in the image of $\sharp:\mathbb{F}_2 \to \mathbb{Z}_2$.

\item
Corollary \ref{perfectTeich} explains that the map $\sharp$ is a natural generalization of the classical construction of the Teichm\"uller map for the Witt vectors of perfect rings. Assume that $A$ is $p$-torsion free and $p$-adically complete such that $A/pA$ is perfect. By the theory of Witt vectors, we have $A \cong W(A/pA)$. Moreover, we have an isomorphism $A^\flat \cong A/pA$, because the Frobenius on $A/pA$ is bijective. Then it can be checked that $\sharp:A^\flat \to A$ is identified with the Teichm\"uller map $\theta:A/pA \to W(A/pA)$.
\end{enumerate}
\end{remark}







\section{Proof of main theorems}

Let us define one terminology. We write $p>0$ as a prime number.

\begin{definition}
Let $A \to B$ be a ring extension. Then $A$ is \textit{$p$-root closed in $B$} if every element $b\in B$ satisfying $b^p \in A$ belongs to $A$. 
\end{definition}

\begin{lemma}
\label{extendmult}
Let $A$ be a $p$-adically complete ring with a nonzero divisor $\varpi \in A$ which satisfies the condition $(\bf{Perf})$. Assume that $A$ is $p$-root closed in $A[\frac{1}{\varpi}]$. Then the multiplicative map $A^\flat \to \plim[x \mapsto x^p] A$ extends to the multiplicative isomorphism $A^\flat[\frac{1}{\varpi^\flat}] \to \plim[x \mapsto x^p] (A[\frac{1}{\varpi}])$.
\end{lemma}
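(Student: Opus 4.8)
The plan is to move everything to the monoid side via Lemma~\ref{Monoid lemma}: using $(\mathbf{Perf})$ together with $p$-adic completeness, identify $A^\flat$ with $T:=\plim[x \mapsto x^p]A$ as rings, so that $\varpi^\flat$ corresponds to the genuine compatible system $\big(\varpi^{\frac{1}{p^m}}\big)_{m\ge 0}$; it then suffices to produce a multiplicative isomorphism $\phi\colon T[\frac{1}{\varpi^\flat}]\to S$, where $S:=\plim[x \mapsto x^p]\big(A[\frac{1}{\varpi}]\big)$. First I would record two elementary observations: since $\big(\varpi^{\frac{1}{p^m}}\big)^{p^m}=\varpi$ is a nonzerodivisor, each $\varpi^{\frac{1}{p^m}}$ is a nonzerodivisor in $A$ and a unit in $A[\frac{1}{\varpi}]$, so $\varpi^\flat$ is a nonzerodivisor in $T$ whose image in $S$ is a unit (the componentwise inverses $\big(\varpi^{-\frac{1}{p^m}}\big)_{m\ge 0}$ again forming a compatible system). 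Hence the componentwise inclusion $T\hookrightarrow S$ --- injective because $A\hookrightarrow A[\frac{1}{\varpi}]$ is --- factors through a well-defined multiplicative map $\phi\colon T[\frac{1}{\varpi^\flat}]\to S$ sending $a/(\varpi^\flat)^n$ to $(\text{image of }a)\cdot(\text{image of }\varpi^\flat)^{-n}$.

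Injectivity of $\phi$ is quick: if $a/(\varpi^\flat)^n$ maps to $0$, then, the image of $\varpi^\flat$ being a unit in $S$, the image of $a=(a_m)_{m\ge 0}$ in $S$ must vanish; hence each $a_m$ dies in $A[\frac{1}{\varpi}]$, i.e.\ is $\varpi$-power torsion, so $a_m=0$ for all $m$ because $\varpi$ is a nonzerodivisor, and therefore $a=0$ in $T$ and $a/(\varpi^\flat)^n=0$.

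The substance of the lemma is surjectivity of $\phi$, and this is the only place where the hypothesis that $A$ is $p$-root closed in $A[\frac{1}{\varpi}]$ enters. Given $c=(c_m)_{m\ge 0}\in S$, choose $k\ge 0$ with $\varpi^k c_0\in A$ --- possible since $c_0\in A[\frac{1}{\varpi}]$ --- and set $b_m:=\big(\varpi^{\frac{1}{p^m}}\big)^k c_m\in A[\frac{1}{\varpi}]$. I would prove $b_m\in A$ for all $m$ by induction on $m$: the base case is $b_0=\varpi^k c_0\in A$, and for $m\ge 1$ the identity $b_m^p=\big(\varpi^{\frac{1}{p^m}}\big)^{kp}c_m^p=\big(\varpi^{\frac{1}{p^{m-1}}}\big)^k c_{m-1}=b_{m-1}$ exhibits $b_m$ as an element of $A[\frac{1}{\varpi}]$ with $b_m^p\in A$, whence $b_m\in A$ by $p$-root closedness. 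The same identity shows $(b_m)_{m\ge 0}$ is a compatible system, hence defines an element $a\in T$, and $\phi\big(a/(\varpi^\flat)^k\big)=\big(b_m\cdot\big(\varpi^{\frac{1}{p^m}}\big)^{-k}\big)_{m\ge 0}=(c_m)_{m\ge 0}=c$. Thus $\phi$ is a bijective multiplicative map; transporting the ring structure of $A^\flat[\frac{1}{\varpi^\flat}]\cong T[\frac{1}{\varpi^\flat}]$ along $\phi$ gives the asserted isomorphism, which by construction restricts to $T\hookrightarrow S$ and so extends the monoid isomorphism of Lemma~\ref{Monoid lemma}. I expect the inductive step to be the only real obstacle: it works precisely because the twisting factors $\big(\varpi^{\frac{1}{p^m}}\big)^k$ are arranged so that taking $p$-th powers matches the transition maps of the inverse system, allowing $p$-root closedness to be invoked one level at a time.
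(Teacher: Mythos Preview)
Your proof is correct and follows essentially the same route as the paper's: both localize the monoid isomorphism of Lemma~\ref{Monoid lemma} at $\varpi^\flat$, observe injectivity from the fact that $\varpi$ and $\varpi^\flat$ are nonzerodivisors, and establish surjectivity by multiplying a given $(c_m)_{m\ge 0}$ by $(\varpi^\flat)^k$ for suitable $k$ and using $p$-root closedness to push each component into $A$. Your explicit induction on $m$ is exactly the argument the paper compresses into the single sentence invoking $p$-root closedness.
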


\begin{proof}
The multiplicative isomorphism $A^\flat \to \plim[x \mapsto x^p] A$ is refined to be a ring isomorphism via $(\ref{tiltaddition})$ and $(\ref{tiltproduct})$, which extends to
\begin{equation}
\label{MonoidExt}
A^\flat[\frac{1}{\varpi^\flat}] \cong (\plim[x \mapsto x^p] A)[\frac{1}{\varpi^\flat}] \to \plim[x \mapsto x^p] (A[\frac{1}{\varpi}]). 
\end{equation}
Then as $\varpi \in A$ (resp. $\varpi^\flat \in A^\flat$) is a nonzero divisor, it follows that the map $(\ref{MonoidExt})$ is injective. Next let $a=(\ldots,a_2,a_1,a_0) \in \plim[x \mapsto x^p] (A[\frac{1}{\varpi}])$ be any element. Then there is an element $c \in \mathbb{N}$ such that $\varpi^c a_0 \in A$. Then in $A[\frac{1}{\varpi}]$, we have
$$
(\varpi^{\frac{c}{p^{n+1}}} \cdot a_{n+1})^p=\varpi^{\frac{c}{p^{n}}} \cdot a_{n}
$$
for any $n \ge 0$. Then the $p$-root closedness of $A$ in $A[\frac{1}{\varpi}]$ implies that the element
$$
(\varpi^c)^\flat a=(\ldots,\varpi^{\frac{c}{p^2}} a_2,\varpi^{\frac{c}{p}}a_1,\varpi^ca_0)
$$
comes from $\plim[x \mapsto x^p] A$. In other words, $a \in  (\plim[x \mapsto x^p] A)[\frac{1}{\varpi^\flat}]$. This proves the surjectivity of $(\ref{MonoidExt})$.
\end{proof}

\begin{proof}[Proof of Main Theorem 1]
$(1)$: Notice that we do not necessarily assume $A/\varpi A$ to have characteristic $p>0$. By hypothesis, $\varpi$ admits all $p$-power roots in $A$ so that $\varpi^\flat \in A^\flat$. Suppose that $x \in A^\flat[\frac{1}{\varpi^\flat}]$ satisfies that $(\varpi^\flat)^c x^n \in A^\flat$ for some $c>0$ and all $n \in \mathbb{N}$. As the map $\sharp$ is multiplicative and $\sharp(\varpi^\flat)=\varpi$, we have
$$
\varpi^c \cdot \sharp(x)^n \in A.
$$
By complete integral closedness of $A$ in $A[\frac{1}{\varpi}]$, we have $\sharp(x) \in A$. We have a multiplicative map $\plim[x \mapsto x^p] (A[\frac{1}{\varpi}]) \cong A^\flat[\frac{1}{\varpi^\flat}]$ by Lemma \ref{extendmult}, because complete integral closedness implies that $A$ is $p$-root closed in $A[\frac{1}{\varpi}]$. After identifying $x$ as an element in $\plim[x \mapsto x^p] (A[\frac{1}{\varpi}])$ via the bijection $\plim[x \mapsto x^p] (A[\frac{1}{\varpi}]) \cong A^\flat[\frac{1}{\varpi^\flat}]$, we have $\sharp(x)=x_0$, where we write $x=(\ldots,x_2,x_1,x_0) \in \plim[x \mapsto x^p] (A[\frac{1}{\varpi}])$. Now $x_k^{p^k}=x_0 \in A$ and, so it follows from the $p$-root closedness that $x_k \in A$ for all $k \ge 0$. Hence $x \in A^\flat$, which finishes the proof of the first assertion.

$(2)$: Let $\widehat{A}$ be the $\varpi$-adic completion. Then since the $p$-adic topology is the same as the $\varpi$-adic topology, it follows that $\widehat{A}$ is $p$-adically complete. Then $\varpi$ is a nonzero divisor on $\widehat{A}$ and $\widehat{A}$ is completely integrally closed in $\widehat{A}[\frac{1}{\varpi}]$ in view of \cite[Corollary 2.7]{NS19}. Since $A^\flat$ is isomorphic to $\widehat{A}^\flat$, the proof of the second assertion is completed by applying the first assertion above.
\end{proof}

For the proof of the second main theorem, we need the following.

\begin{lemma}
\label{LemmaIntegralIdeal}
Let $A \to B$ be a ring extension with an ideal $I \subset A$ and assume that $I=IB$ in $B$; in other words, $I$ is an ideal in both $A$ and $B$. Then $A$ is integrally closed in $B$ if and only if $A/I$ is integrally closed in $B/IB$.
\end{lemma}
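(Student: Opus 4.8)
The plan is to prove both implications directly by chasing integral equations and using the hypothesis $I = IB$. Recall that $I$ being an ideal of $B$ means in particular that $I \subseteq A$ and $IB = I$, so the quotient ring $B/I$ makes sense and $A/I \hookrightarrow B/I$ is the induced ring extension (injectivity here uses $I = A \cap I$, which is automatic since $I \subseteq A$). Throughout I would write bars for images in the quotient.

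For the ``only if'' direction, suppose $A$ is integrally closed in $B$, and let $\bar b \in B/I$ be integral over $A/I$, so $\bar b^n + \bar a_1 \bar b^{n-1} + \cdots + \bar a_n = 0$ for some $a_i \in A$. Lifting to $B$, this says $b^n + a_1 b^{n-1} + \cdots + a_n \in I$. Since $I = IB$, write this element as a finite sum $\sum_j c_j d_j$ with $c_j \in I \subseteq A$ and $d_j \in B$; actually it is cleaner to observe directly that $b^n + a_1 b^{n-1} + \cdots + a_n = c$ for some $c \in I \subseteq A$, hence $b^n + a_1 b^{n-1} + \cdots + (a_n - c) = 0$ is a monic equation over $A$. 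Therefore $b \in A$, so $\bar b \in A/I$. (Here the key point is just $I \subseteq A$; the hypothesis $IB = I$ is what guarantees $I$ really is an ideal of $B$ and makes $B/I$ a ring.)

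For the ``if'' direction, suppose $A/I$ is integrally closed in $B/I$, and let $b \in B$ be integral over $A$, say $b^n + a_1 b^{n-1} + \cdots + a_n = 0$ with $a_i \in A$. Reducing mod $I$ gives a monic equation for $\bar b$ over $A/I$, so by hypothesis $\bar b \in A/I$; that is, there exists $a \in A$ with $b - a \in I \subseteq A$. Hence $b = a + (b - a) \in A$. So $A$ is integrally closed in $B$.

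In fact this argument shows something slightly stronger and makes the role of the hypothesis transparent: the only feature of $I$ used is that $I$ is an ideal contained in $A$ such that $B/I$ is a ring containing $A/I$, i.e. $IB \subseteq I$ (equivalently $IB = I$ since $I \subseteq A \subseteq B$ already gives $I \subseteq IB$). There is no real obstacle here; the one thing to state carefully is why $A/I \to B/I$ is injective, which follows because an element of $A$ mapping into $IB = I$ already lies in $I$. I would write the proof in the order: (i) note $A/I \hookrightarrow B/I$; (ii) the ``only if'' lift-and-subtract argument; (iii) the ``if'' lift-and-subtract argument.
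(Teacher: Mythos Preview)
Your proof is correct. The overall structure---establish injectivity of $A/I \to B/I$, then argue each direction by lifting/reducing---matches the paper's. The only difference is which characterization of integrality is used: the paper phrases both directions in terms of the $A$-module (resp.\ $A/I$-module) $\sum_{n\ge 0} A b^n$ (resp.\ $\sum_{n\ge 0} (A/I)\overline{b}^n$) being finitely generated, and checks that the inverse image of the latter under $B \to B/IB$ equals the former (using $I \subset A \subset A[b]$); you instead manipulate monic equations directly. Your route is a bit shorter and makes the role of the hypothesis $I \subset A$ more transparent, since in each direction you visibly move a single element of $I$ across into an $A$-coefficient.
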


\begin{proof}
First, note that the natural map $A/I \to B/IB$ is injective by the assumption that $IB \subset A$ is an ideal of $A$, which coincides with $I$. Assume that $A$ is integrally closed in $B$. Let $\overline{b} \in B/IB$ be integral over $A/I$ and let $b \in B$ be the inverse image of $\overline{b}$ under $B \to B/IB$. Because $\sum_{n=0}^\infty (A/I) \overline{b^n}$ is a finitely generated $A/I$-submodule of $B/IB$, the assumption $IB \subset A$ implies that the inverse image of $\sum_{n=0}^\infty (A/I) \overline{b^n}$ under $B \to B/IB$ is $\sum_{n=0}^\infty A b^n$ which is a finitely generated $A$-submodule of $B$ and hence $b \in A$. So we get $\overline{b} \in A/I$, as desired. Conversely, let $b \in B$ be integral over $A$. Then $\sum_{n=0}^\infty (A/I) \overline{b^n}$ is a finitely generated $A/I$-submodule of $B/IB$ and hence $\overline{b} \in A/I$ by the integral closedness of $A/I$ in $B/IB$. Again by $IB \subset A$, we obtain $b \in A$.
\end{proof}

\begin{example}
Let us give an example which illustrates the situation of Lemma \ref{LemmaIntegralIdeal}. Namely, we construct an example of a ring extension $A \to B$ with an ideal $J \subset A$ such that $A \to B$ is not integral and $I:=JB \subset A$. Let $A$ be a valuation ring of dimension $2$ containing $\mathbb{F}_p$, and let $0 \subsetneq \fp \subsetneq \fq$ be the saturated chain of prime ideals of $A$, that is, $\fq$ is the maximal ideal. Choose a nonzero element $t \in \fp$ (called a ``pseudouniformizer"). Let $K$ be the field of fractions of $A$. In fact, we have $K=A[\frac{1}{t}]$. Let $A_\infty:=\bigcup_{n>0} A^{\frac{1}{p^n}}$ with the field of fractions $K_\infty=A_\infty[\frac{1}{t}]$. Then $A_\infty$ is the perfect closure of $A$, which is a perfect valuation ring of dimension $2$. Indeed, we have a saturated chain of prime ideals in $A_\infty$:
$$
0 \subsetneq \fp_\infty=\bigcup_{n>0} \fp^{\frac{1}{p^n}} \subsetneq \fq_\infty=\bigcup_{n>0} \fq^{\frac{1}{p^n}}.
$$
It is well known that the valuation ring is integrally closed in the field of fractions. Take the complete integral closure of $A_\infty$ in $K_\infty$. This is equal to the localization $(A_\infty)_{\fp_\infty}=(A_\fp)_\infty$ by a theorem of Krull (see Proposition  \ref{completeintKrull}). Let $J=tA_\infty$. Then we claim that the ring extension
$$
A_\infty \to (A_\fp)_\infty
$$
satisfies the hypothesis of Lemma \ref{LemmaIntegralIdeal}. Pick $x \in (A_\fp)_\infty$. Then since $x \in K_\infty$ is almost integral over $A_\infty$, there is an integer $m>0$ such that $t^m x^n \in A_\infty$ for all $n>0$ by the definition of almost integrality. By taking a sufficiently large $n>0$, we may assume $t^{p^n}x^{p^n} \in A_\infty$. The perfectness of $A_\infty$ then implies that
$$
tx=(t^{p^n}x^{p^n})^{\frac{1}{p^n}} \in A_\infty.
$$
So we conclude that $t(A_\fp)_\infty \subset A_\infty$.

One can modify the above construction to obtain an example $A \to B$ such that $A$ is not integrally closed in $B$. For example, one can construct a perfect domain $A$ that is not integrally closed in $B$, which is left as an exercise.
\end{example}

\begin{proof}[Proof of Main Theorem 2]
Let $B$ be the complete integral closure of $A$ in $A[\frac{1}{\varpi}]$.
First, notice that the inclusion $\varpi B \subset A$ holds in view of \cite[Lemma 2.3]{NS19}, which gives $pB \subset A$ because $p \in \varpi^pA$ by the hypothesis. Next consider an element $\varpi^{\frac{1}{p^k}} \cdot b \in B$ for arbitrary $b \in B$ and $k \in \mathbb{N}$. So we have
$$
(\varpi^{\frac{1}{p^k}} \cdot b)^{p^k}=\varpi \cdot b^{p^k} \in A.
$$
Since $A$ is assumed to be integrally closed in $A[\frac{1}{\varpi}]$, it is $p$-root closed in $A[\frac{1}{\varpi}]$, so it follows that $\varpi^{\frac{1}{p^k}} \cdot b \in A$. As $b \in B$ and $k \in \mathbb{N}$ are arbitrary, we have in particular
\begin{equation}
\label{tiltingcomplete}
\varpi^{\frac{1}{p^k}}B \subset A \subset B~\mbox{for every}~k>0.
\end{equation}

We need the following claim.

\begin{claim}
\label{completeintegral}
$B$ is completely integrally closed in $A[\frac{1}{\varpi}]=B[\frac{1}{\varpi}]$.
\end{claim}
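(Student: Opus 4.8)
The plan is to reduce the claim to the tautology defining $B$. Concretely, I would show that any element of $A[\frac{1}{\varpi}]$ that is almost integral over $B$ is in fact almost integral over $A$; since $B$ is the complete integral closure of $A$ in $A[\frac{1}{\varpi}]$, such an element then lies in $B$. The one ingredient needed is the inclusion $\varpi B \subset A$ already established above via \cite[Lemma 2.3]{NS19}. I would also record that $B[\frac{1}{\varpi}] = A[\frac{1}{\varpi}]$ (because $A \subset B \subset A[\frac{1}{\varpi}]$) and that $\varpi$ is a unit, hence a nonzero divisor, in $B$, so that the convenient description of almost integrality in a localization at a nonzero divisor applies with base ring $B$: an element $x \in B[\frac{1}{\varpi}]$ is almost integral over $B$ if and only if $\varpi^c x^n \in B$ for all $n \ge 0$ and some fixed $c \in \mathbb{N}$ (any finitely generated $B$-submodule of $B[\frac{1}{\varpi}]$ containing $\sum_{n} B x^n$ sits inside $\varpi^{-c}B$ for suitable $c$).

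The argument is then a single line. Given $x \in B[\frac{1}{\varpi}] = A[\frac{1}{\varpi}]$ almost integral over $B$, fix $c \in \mathbb{N}$ with $\varpi^c x^n \in B$ for all $n \ge 0$; multiplying by $\varpi$ and using $\varpi B \subset A$ yields
$$
\varpi^{c+1} x^n = \varpi \cdot (\varpi^c x^n) \in \varpi B \subset A \quad \text{for all } n \ge 0,
$$
so $x$ is almost integral over $A$ in $A[\frac{1}{\varpi}]$. By the very definition of $B = A_{A[\frac{1}{\varpi}]}^{\rm{ci}}$, we get $x \in B$, which is the claim.

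I do not expect a genuine obstacle here: the substantive point is precisely that $B$ satisfies the ideal relation $\varpi B \subset A$, and this has already been imported from \cite[Lemma 2.3]{NS19} at the start of the proof of Main Theorem \ref{maincomplete2}. This is the same phenomenon recorded in Corollary \ref{algcompleteint} — the complete integral closure of $A$ inside a localization at a nonzero divisor is itself completely integrally closed — and one could instead simply cite that corollary; I prefer the self-contained computation above since the needed inclusion is already in hand. It is worth remembering, as a sanity check, that without the relation $\varpi B \subset A$ the statement can fail, since in general the complete integral closure of a ring in an extension need not be completely integrally closed (Krull).
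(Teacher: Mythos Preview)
Your argument is correct and is essentially the paper's own proof: take $x$ almost integral over $B$, use the characterization $\varpi^c x^n \in B$, multiply by $\varpi$ and invoke $\varpi B \subset A$ (equivalently \cite[Lemma 2.3]{NS19}) to get $\varpi^{c+1} x^n \in A$, hence $x \in B$ by definition. The only cosmetic difference is that the paper cites \cite[Lemma 2.3]{NS19} at the multiplication step rather than the pre-established inclusion $\varpi B \subset A$; note also that Corollary~\ref{algcompleteint} is recorded in the paper \emph{after} this claim (as a consequence of the same computation), so citing it would be circular---your choice to give the one-line argument directly is the right one.
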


\begin{proof}[Proof of Claim \ref{completeintegral}]
Let $b \in A[\frac{1}{\varpi}]$ be almost integral over $B$. Then there is an integer $k>0$ such that $\varpi^k b^n \in B$ for all $n>0$. It follows from \cite[Lemma 2.3]{NS19} that $\varpi^{k+1} b^n \in A$ for $n>0$, which implies that $b$ is almost integral over $A$. Thus, $b \in B$ and $B$ is completely integrally closed in $A[\frac{1}{\varpi}]$.
\end{proof}

Next we prove that $B$ is $p$-adically complete. Since $\bigcap_{n>0}p^{n+1}B \subset \bigcap_{n>0}p^{n}A=0$, it follows that $B$ is $p$-adically separated. Take a Cauchy sequence $x_n=\sum_{i=0}^nb_i p^i$ with $b_i \in B$. After truncation, it suffices to show that
$$
x'_n=\sum_{i=0}^nb_{i+1} p^{i+1}
$$
converges in $B$. Letting $c_i:=b_{i+1}p$, we have $x'_n=\sum_{i=0}^nc_i p^i$ with $c_i \in A$. Since $A$ is $p$-adically complete, the sequence $\{x'_n\}_{n \ge 0}$ converges to $x'_\infty \in A \subset B$. Now it follows from Claim \ref{completeintegral} and Main Theorem \ref{maincomplete1} that $B^\flat$ is completely integrally closed in $B^\flat[\frac{1}{\varpi^\flat}]$. It follows from the description of the tilt as a multiplicative monoid as in Lemma \ref{Monoid lemma} that the injection of $p$-adically complete rings $A \hookrightarrow B$ induces an injection $A^\flat \hookrightarrow B^\flat$. 

So $(\ref{tiltingcomplete})$ gives that $\varpi^\flat B^\flat \subset A^\flat$ and $A^\flat[\frac{1}{\varpi^\flat}]=B^\flat[\frac{1}{\varpi^\flat}]$ and in particular, $\varpi B$ is an ideal of $A$ and $\varpi^\flat B^\flat$ is an ideal of $A^\flat$. We obtain the commutative diagram:
$$
\begin{CD}
B^\flat @>\pi_B^\flat>> B^\flat/\varpi^\flat B^\flat @>\cong>> B/\varpi B @<\pi_B<< B\\
@AAA @AAA @AAA @AAA \\
A^\flat @>\pi_A^\flat>> A^\flat/\varpi^\flat B^\flat @>\cong>> A/\varpi B @<\pi_A<< A\\
\end{CD}
$$
Here, all vertical maps are injective which are induced by the injection $A \hookrightarrow B$, and $\pi_A$ and $\pi_A^\flat$ are the quotient maps. We claim that the isomorphism $A^\flat/\varpi^\flat B^\flat \cong A/\varpi B$ is induced from
\begin{equation}
\label{tiltisomor}
A^\flat=\varprojlim\{\cdots \xrightarrow{F}A/pA \xrightarrow{F} A/pA\} \to A/pA \to A/\varpi B,
\end{equation}
where the first map is the projection into the first component, and the second one is the quotient map. Likewise, $B^\flat/\varpi^\flat B^\flat \cong B/\varpi B$ is induced from the composite map
\begin{equation}
\label{tiltisomor2}
B^\flat=\varprojlim\{\cdots \xrightarrow{F}B/pB \xrightarrow{F} B/pB\} \to B/pB \to B/\varpi B.
\end{equation}
Let us first explain the isomorphism: $B^\flat/\varpi^\flat B^\flat \cong B/\varpi B$. We check that this is induced by $(\ref{tiltisomor2})$ as follows. Since $A/pA$ is semiperfect, it follows by \cite[Lemma 3.7]{NS19} that $B/pB$ is also semiperfect.
So the first map of $(\ref{tiltisomor2})$ is surjective and it follows that $B^\flat \to B/\varpi B$ is surjective. Since $B$ is integrally closed in $B[\frac{1}{\varpi}]$ and there is a surjection $B/pB \twoheadrightarrow B/\varpi B$, we find that the Frobenius map $F:B/\varpi B \to B/\varpi B$ is surjective and induces an isomorphism $B/\varpi^{\frac{1}{p^{n+1}}}B \cong B/\varpi^{\frac{1}{p^{n}}}B$ for any $n > 0$. Hence $(\ref{tiltisomor2})$ induces an isomorphism $B^\flat/\varpi^\flat B^\flat \cong B/\varpi B$, as desired. The same discussion using $(\ref{tiltisomor})$ yields an isomorphism: $A^\flat/\varpi^\flat B^\flat \cong A/\varpi B$.

Now we apply Lemma \ref{LemmaIntegralIdeal}. First, $A/\varpi B$ is integrally closed in $B/\varpi B$, so $A^\flat/\varpi^\flat B^\flat$ is integrally closed in $B^\flat/\varpi^\flat B^\flat$. Lemma \ref{LemmaIntegralIdeal} again applies to give that $A^\flat$ is integrally closed in $B^\flat$, which completes the proof.
\end{proof}

\begin{remark}
If $A/pA$ is semiperfect, it immediately implies by the Mittag-Leffler condition that
$$
{\mathbf{R}}^1 \varprojlim\{\cdots \xrightarrow{F} A/pA \xrightarrow{F} A/pA\}=0.
$$
In order to generalize Main Theorem \ref{maincomplete2} without the assumption of semiperfectness, it seems that the vanishing of the derived limits is an important problem to study. See \cite[Definition 3.5.6]{Wei94} for details.
\end{remark}

Let us record the following algebraic result.

\begin{corollary}
\label{algcompleteint}
Let $A$ be a ring with a nonzero divisor $\varpi \in A$ and let $B$ be the complete integral closure of $A$ in $A[\frac{1}{\varpi}]$. Then $B$ is completely integrally closed in $A[\frac{1}{\varpi}]$.
\end{corollary}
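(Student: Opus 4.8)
The plan is to deduce the statement from two ingredients already at hand: the elementary description of almost integrality inside a localization at a nonzerodivisor, and the inclusion $\varpi B \subseteq A$ recorded in \cite[Lemma 2.3]{NS19}. Recall, as in the discussion following Definition \ref{completeintdef}, that for a nonzerodivisor $t \in A$ an element $x \in A[\frac{1}{t}]$ is almost integral over $A$ if and only if there is a single $c \in \mathbb{N}$ with $t^{c}x^{n} \in A$ for all $n \geq 0$: the forward implication is immediate since then $\sum_{n \geq 0} A x^{n} \subseteq t^{-c}A$, and conversely a finitely generated $A$-submodule of $A[\frac{1}{t}]$ containing all powers of $x$ is contained in $t^{-c}A$ for $c$ larger than the exponents needed to clear the denominators of a finite generating set. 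Since $A \subseteq B \subseteq A[\frac{1}{\varpi}]$ and $\varpi$ is invertible in $A[\frac{1}{\varpi}]$, we have $B[\frac{1}{\varpi}] = A[\frac{1}{\varpi}]$, and $\varpi$ is a nonzerodivisor in $B$ as well, so the same description applies verbatim to the extension $B \subseteq B[\frac{1}{\varpi}]$.

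With these in place the argument is short, and parallels the proof of Claim \ref{completeintegral}. First I would take $b \in A[\frac{1}{\varpi}]$ almost integral over $B$; by the description applied to $B \subseteq B[\frac{1}{\varpi}] = A[\frac{1}{\varpi}]$ there is an integer $k > 0$ with $\varpi^{k}b^{n} \in B$ for every $n \geq 0$. Next I would invoke $\varpi B \subseteq A$ from \cite[Lemma 2.3]{NS19} and multiply through, obtaining $\varpi^{k+1}b^{n} \in A$ for all $n \geq 0$. The ``if'' direction of the description (with $c = k+1$) then shows that $b$ is almost integral over $A$, hence $b \in B$ because $B$ is by definition the complete integral closure of $A$ in $A[\frac{1}{\varpi}]$. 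As $b$ was arbitrary, $B$ is completely integrally closed in $A[\frac{1}{\varpi}]$, which is the assertion.

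The single nontrivial point is the passage from ``$\varpi^{k}b^{n} \in B$ for all $n$'' to the \emph{uniform} bound ``$\varpi^{k+1}b^{n} \in A$ for all $n$''. A priori the membership $\varpi^{k}b^{n} \in B$ only yields, for each $n$ separately, an exponent $c_{n}$ with $\varpi^{c_{n}}(\varpi^{k}b^{n})^{m} \in A$ for all $m$; without controlling the $c_{n}$ one cannot conclude that $b$ is almost integral over $A$. The inclusion $\varpi B \subseteq A$ is precisely what supplies a uniform exponent, so I would flag this as the crux of the argument, and it is the only input not internal to the present note. If a self-contained account were wanted, one would first reprove $\varpi B \subseteq A$ directly from the definition of the complete integral closure, using that $\varpi$ is a nonzerodivisor; but since \cite{NS19} is cited throughout, invoking that lemma is the economical route.
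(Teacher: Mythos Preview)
Your proof is correct and is essentially identical to the paper's own argument: the paper does not give a separate proof of Corollary~\ref{algcompleteint} but simply records it after observing that the proof of Claim~\ref{completeintegral} (which you explicitly cite as your model) uses nothing beyond the nonzerodivisor hypothesis and \cite[Lemma 2.3]{NS19}. Your added commentary on why the uniform exponent via $\varpi B \subseteq A$ is the crux is accurate and helpful, but the underlying argument matches the paper's step for step.
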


In the present article, we emphasized that the definition of integrality uses both additive and multiplicative structures of rings, while the definition of almost integrality involves the multiplicative structure of rings (see the paragraph just after Definition \ref{completeintdef}). Using this fact, we make the following observation.

\begin{remark}
Let $R$ be a Noetherian ring with a nonzero divisor $y$ contained in the Jacobson radical of $R$. It has been known if the residue ring $R/yR$ is an integrally closed domain, then $R$ itself must be an integrally closed domain (see for example \cite[Table 2]{Mu22}). A usual proof of this fact relies on some deep facts from the theory of Noetherian rings, such as Serre's normality criterion, thus not elementary. Here, we offer another proof, using \cite[Chapter V, \S 1.4, Proposition 15]{Bour98} whose proof utilizes the multiplicative nature of almost integrality.

Assume that $R$ is as above and consider the topology on $R$ induced by the $yR$-adic filtration. In this situation, \cite[Chapter III, \S 3.3, Proposition 3]{Bour98} gives that every principal ideal of $R$ is closed. Let $\gr_{yR}(R):=\bigoplus_{i=0}^\infty y^iR/y^{i+1}R$ be the associated graded ring. Since $y$ is a nonzero divisor, it follows that $\gr_{yR}(R) \cong R/yR[T]$, where $T$ is an indeterminate over $R/yR$ (for the proof of this isomorphism, see \cite[Theorem 16.2]{M86}). Since $R/yR$ is integrally closed, so is $\gr_{yR}(R)$. Since all rings involved are Noetherian, \cite[Chapter V, \S 1.4, Proposition 15]{Bour98} or \cite[Theorem 4.5.9]{BrHer93}
immediately gives that $R$ is integrally closed.
\end{remark}

\section{Appendix: Completion of valuation rings and Krull's work on complete integral closure}

We start with some background history. Let $p>0$ be a prime and let $\mathbb{Z}_p$ be the ring of $p$-adic numbers. We define $\mathcal{O}_{\mathbb{C}_p}$ to be the $p$-adic completion of the integral closure $\mathbb{Z}_p^+$ of $\mathbb{Z}_p$ in an algebraic closure of $\mathbb{Q}_p$. Then it is a well-known fact in algebraic number theory that $\mathbb{Z}_p^+$ and $\mathcal{O}_{\mathbb{C}_p}$ are valuation rings of dimension one. 

Our aim in this section is to extend this result to an arbitrary valuation ring of dimension one. Actually, such a result was already known to experts. For instance, there is a general result concerning the completion of valuation rings in \cite[Proposition 9.1.16]{GR22} and \cite[Proposition 6.2]{BM21}. However, our proof is motivated by Krull's early study on the complete integral closure of valuation rings. Indeed, we explain his classical result on the behavior of valuation rings under complete integral closure. The next proposition is due to Krull \cite{Krull31}. It seems that the conclusion of this proposition shows a particular distinctiveness of complete integral closure of general commutative rings. As the authors were not able to find it in some standard references or books, we decided to give a proof.

\begin{proposition}[Krull]
\label{completeintKrull}
Let $V$ be a valuation ring with field of fractions $K$. 
\begin{enumerate}
\item
If $V$ is of dimension one, then $V$ is completely integrally closed in $K$.

\item
If $V$ has a prime ideal of height one, then the complete integral closure of $V$ in $K$ is a valuation ring of rank one, which is equal to the localization $V_\fp$ for a height-one prime $\fp$. If $V$ does not have a prime ideal of height one, then the complete integral closure of $V$ is $K$.
\end{enumerate}
\end{proposition}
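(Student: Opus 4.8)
The plan is to pass to the value group of $V$ and argue combinatorially. Write $\Gamma:=v(K^{\times})$ for the value group, with valuation $v\colon K^{\times}\to\Gamma$, and recall the standard dictionary: the primes of $V$ form a chain and are in order-reversing bijection with the convex subgroups of $\Gamma$, a prime $\fp$ corresponding to a convex subgroup $\Delta$ in such a way that $V_{\fp}$ has value group $\Gamma/\Delta$ and $\Ht(\fp)=\rank(\Gamma/\Delta)$. I will also use that every finitely generated $V$-submodule of $K$ is a principal fractional ideal, so that $x\in K$ is almost integral over $V$ precisely when there is $y\in K^{\times}$ with $n\,v(x)\ge v(y)$ for all $n\ge 0$; that if $V\subseteq V'$ then any element almost integral over $V$ is almost integral over $V'$; and the elementary fact that $\Gamma$ has rank $\le 1$ if and only if it is archimedean (otherwise the smallest convex subgroup containing some $\gamma>0$ misses an element dominating all $n\gamma$, hence is a proper nonzero convex subgroup). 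Assertion (1) is then immediate: if $\dim V=1$ then $\Gamma$ is archimedean, and for $x\in K$ almost integral over $V$ the inequalities $n\,v(x)\ge v(y)$ force $v(x)\ge 0$, i.e. $x\in V$.

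For assertion (2), first suppose $V$ has a height-one prime; since the primes form a chain this prime $\fp$ is unique and corresponds to the largest proper convex subgroup $\Delta$, with $V_{\fp}$ of value group $\Gamma/\Delta$ of rank one. By assertion (1), $V_{\fp}$ is completely integrally closed in $K$, and since an element almost integral over $V$ is almost integral over $V_{\fp}$, the complete integral closure $V_K^{\rm{ci}}$ is contained in $(V_{\fp})_K^{\rm{ci}}=V_{\fp}$. For the reverse inclusion take $x\in V_{\fp}$ with $v(x)=\gamma$: if $\gamma\ge 0$ then $x\in V$, and otherwise $-\gamma\in\Delta$. Choose $\epsilon\in\Gamma\setminus\Delta$ with $\epsilon>0$ (possible since $\Delta\subsetneq\Gamma$ is a subgroup); convexity of $\Delta$ gives $n(-\gamma)<\epsilon$ for all $n\ge 0$, so picking $c\in V$ with $v(c)=\epsilon$ we get $v(cx^{n})=\epsilon+n\gamma>0$, hence $cx^{n}\in V$ for every $n$ and $x$ is almost integral over $V$. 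Thus $V_K^{\rm{ci}}=V_{\fp}$, a valuation ring of rank one.

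Now suppose $V$ has no height-one prime; if $V=K$ there is nothing to prove, so assume $\Gamma\ne 0$. The hypothesis says $\Gamma$ has no largest proper convex subgroup, and the crucial point is that this forces: for every $\gamma\in\Gamma$ the smallest convex subgroup $C(\gamma)$ containing $\gamma$ is proper. Indeed, if $C(\gamma)=\Gamma$ then no proper convex subgroup contains $\gamma$, so the union of all proper convex subgroups (a convex subgroup, since the convex subgroups form a chain) is itself proper and hence is the largest one, a contradiction. Granting this, fix $x\in K^{\times}$ with $v(x)=\gamma$: if $\gamma\ge 0$ then $x\in V$, and if $\gamma<0$ then, since $C(-\gamma)\subsetneq\Gamma$, there is $\epsilon>0$ in $\Gamma$ with $\epsilon>n(-\gamma)$ for all $n$, and as above a $c\in V$ with $v(c)=\epsilon$ exhibits $x$ as almost integral over $V$. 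Hence $V_K^{\rm{ci}}=K$. The step I expect to be the main obstacle is precisely this last combinatorial one — exploiting that the convex subgroups form a chain to turn ``no maximal proper convex subgroup'' into ``no element convexly generates $\Gamma$'' — together with keeping the order-reversing dictionary among primes, convex subgroups, and localizations straight; the rest is a routine unwinding of the definition of almost integrality and the archimedean characterization of rank-one ordered abelian groups.
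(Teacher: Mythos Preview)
Your proof is correct and takes a genuinely different route from the paper's. The paper argues ideal-theoretically inside $V$: for part~(1) it uses the real-valued norm attached to a rank-one valuation, and for part~(2) it fixes $f\in\fm$ and studies the ideal $I=\bigcap_{n>0} f^{n}V$, proves directly that $I$ is prime, and then shows $\fp\subseteq I$ (when a height-one prime $\fp$ exists) or $I\ne 0$ (in the other case, citing an external result from Fujiwara--Kato on $f$-adic separatedness and height-one primes) to conclude that $1/f$ is almost integral. You instead transport the entire problem to the value group $\Gamma$ via the convex-subgroup dictionary and settle everything with the archimedean/non-archimedean dichotomy; your key combinatorial step---that the absence of a maximal proper convex subgroup forces every $C(\gamma)$ to be proper---replaces the paper's appeal to \cite{FK18} and makes the argument self-contained and purely order-theoretic. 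The paper's approach stays closer to the ring and avoids invoking the prime/convex-subgroup correspondence, but at the cost of an external citation; your approach is cleaner once that correspondence is granted and makes the structural reason for the result (bounded versus unbounded orbits of $-v(x)$ under multiplication by $n$) more transparent.
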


\begin{proof}
$(1)$: By the assumption that $V$ is $1$-dimensional, we have the attached non-archimedean norm $|\cdot|:K \to \mathbb{R}_{\ge 0}$. Then $V=\{a \in K~|~|a| \le 1\}$, and assume that $x \in K$ is almost integral over $V$. Then there is an element $0 \ne t \in V$ such that the sequence of elements $tx,tx^2,\ldots$ belongs to $V$. So we have $|t||x|^n=|tx^n| \le 1$. If $x \notin V$, then since $|x| >1$, it follows that $|tx^n| \to \infty$ as $n \to \infty$. This is a contradiction and hence, $x \in V$.

$(2)$: Recall that the set of ideals in a valuation ring is totally ordered, which we use below. Assume that $V$ has a height-one prime ideal $\fp$. Then we have a chain of inclusions: $V \subset V_\fp \subset K$ and $V_\fp$ is a valuation ring of rank one. Let $W$ be the complete integral closure of $V$ in $K$. Then since $W$ is an overring of the valuation ring $V$ and contained in $K$, it follows that $W$ is also a valuation ring. Let us prove that $W=V_\fp$, so that $W$ is a valuation ring of rank one. Since $V_\fp$ was shown to be a completely integrally closed domain in $(1)$, we see that $W \subset V_\fp$. Let $\fm$ be the maximal ideal of $V$, let $f \in \fm \setminus \fp$ be any element and let $t \in \fp$ be a nonzero element. We prove that $\frac{1}{f} \in W$. To this aim, consider $I:=\bigcap_{n>0} f^n V$. 

We prove that $I$ is a prime ideal. Let $a,b, \in V$ be such that $ab \in I$ and assume that $a,b \notin I$. Then we have
\begin{equation}
\label{fraction}
\frac{ab}{f^n} \in V;~\forall n>0.
\end{equation}
Moreover, as the set of ideals in a valuation ring is totally ordered and the hypothesis $a,b \notin I$, we can find some $m>0$ for which $\frac{f^m}{a},\frac{f^m}{b} \in V$. In particular,
$$
\frac{f^{m+1}}{a},\frac{f^{m+1}}{b} \in \fm
$$
and thus, $\frac{f^{2m+2}}{ab} \in \fm$. This together with $(\ref{fraction})$ implies $\frac{1}{f} \in \fm$. This is a contradiction to the choice of $f$. So we conclude that $a \in I$ or $b \in I$ holds and hence, $I$ is a prime ideal.

We have $\fp \subset I$, because if otherwise, we would have $\fp \not \subset I$. Then the only possibility that this happens is when $I=0$ and so $f^N \in \fp$ for $N \gg 0$ by the fact that the set of ideals are totally ordered in a valuation domain. But this is a contradiction. Now we have
$$
t \in \bigcap_{n>0} f^nV~\mbox{or equivalently},~t\Big(\frac{1}{f}\Big)^n \in V,
$$
which implies that $\frac{1}{f} \in K$ is almost integral over $V$ and belongs to $W$. So $W$ is a valuation ring of rank one, as desired.

In the case that $V$ does not have a prime ideal of height one, let $f \in \fm \setminus\{0\}$ be any nonzero element. Set $I=\bigcap_{n>0} f^nV$, which was shown to be a prime ideal. Suppose that $I=(0)$. Then it implies that $V$ is $f$-adically separated. By \cite[Chapter 0, Proposition 6.7.3]{FK18}, we see that $\sqrt{(f)}$ is a height-one prime, which is a contradiction. Hence we have that $I$ is a nonzero prime ideal and there is a nonzero element $t \in I=\bigcap_{n>0} f^nV$, from which it follows that $\frac{1}{f} \in K$ is almost integral over $V$, as desired.
\end{proof}

Using the above proposition, we prove the following result, which specializes to the case where $V=\mathcal{O}_{\overline{\mathbb{Q}}_p}$. See also \cite[Proposition 6.2]{BM21}.

\begin{proposition}
Let $V$ be a valuation ring of dimension one and let $t \in V$ be any element that is not zero and not a unit. Then the $t$-adic completion of $V$, denoted by $\widehat{V}$, is a valuation ring of dimension one. Moreover, the natural map $V \to \widehat{V}$ is injective.
\end{proposition}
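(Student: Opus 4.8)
The approach is to reduce everything to the fact that a dimension-one valuation ring is the unit ball of a real-valued non-archimedean norm, and that completion does not enlarge the value group. Concretely, since $V$ has dimension one its value group is a nontrivial subgroup $\Gamma$ of $(\mathbb{R},+)$, and $V$ carries an associated norm $|\cdot| : K \to \mathbb{R}_{\ge 0}$ (written multiplicatively, so that $|K^{\times}|$ is a subgroup of $\mathbb{R}_{>0}$) with $V = \{a \in K : |a| \le 1\}$, exactly as in the proof of Proposition~\ref{completeintKrull}(1). Because $t$ is nonzero and not a unit, $0 < |t| < 1$, hence $|t^{n}| = |t|^{n} \to 0$. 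This has two immediate consequences: first, $\bigcap_{n>0} t^{n}V = (0)$, so $V$ is $t$-adically separated and $V \to \widehat{V}$ is injective; second, the principal ideals $t^{n}V = \{a \in V : |a| \le |t|^{n}\}$ are cofinal among the balls of $|\cdot|$, so the $t$-adic topology on $V$ agrees with the norm topology, and therefore $\widehat{V} = \varprojlim_{n} V/t^{n}V$ is the completion of $V$ as a metric (uniform) space.

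Next I would pass to the completed fraction field. Let $\widehat{K}$ denote the completion of $(K,|\cdot|)$; it is again a non-archimedean valued field, $|\cdot|$ extends multiplicatively to it, and the fact that $V \hookrightarrow K$ is a uniform embedding identifies $\widehat{V}$ with the closure of $V$ inside $\widehat{K}$. One checks readily that this closure is exactly the unit ball $\{x \in \widehat{K} : |x| \le 1\}$: if $x \neq 0$ is the limit of a sequence $a_{n} \in K$, then $|x - a_{n}| < |x|$ for $n \gg 0$, whence $|a_{n}| = |x|$ by the isosceles-triangle property, so $a_{n}$ lies in $V$ (resp.\ outside $V$) according to whether $|x| \le 1$ (resp.\ $|x| > 1$). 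In particular $\widehat{V}$ is the valuation ring of $\widehat{K}$, hence an integral domain and a valuation ring, and the map $V \to \widehat{V}$ is injective.

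It remains to pin down the dimension, for which the crucial point---and the only step that is not formal---is that completion does not enlarge the value group: $|\widehat{K}^{\times}| = \Gamma$. This is again the ``eventually constant absolute value'' argument: for $0 \neq x \in \widehat{K}$ and $a_{n} \in K$ with $a_{n} \to x$, one has $|x| = |a_{n}| \in |K^{\times}| = \Gamma$ for all large $n$, and $\Gamma$ being a group this forces $|\widehat{K}^{\times}| \subseteq \Gamma$; the reverse inclusion is trivial. Therefore $\widehat{V}$ is a valuation ring whose value group $\Gamma$ has rank one, so $\dim \widehat{V} = 1$; alternatively, $t \in \widehat{V}$ is a nonzero non-unit (since $|1/t| > 1$), so $\widehat{V}$ is neither a field nor of dimension $\ge 2$. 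I expect the only genuine subtlety to be this value-group stability, which is where the hypothesis that $V$ is one-dimensional (equivalently, that the valuation is archimedean) is really used; the remaining verifications---continuity and multiplicativity of the extended norm, separatedness of $\widehat{V}$, and the compatibility of the $t$-adic and metric completions---are routine.
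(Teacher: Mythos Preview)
Your proof is correct but follows a genuinely different route from the paper's. Both arguments begin identically---use the real-valued norm attached to the rank-one valuation and observe $|t^{n}|\to 0$ to get $t$-adic separatedness and injectivity of $V\to\widehat{V}$---but then diverge. You proceed by identifying the $t$-adic topology with the norm topology, realizing $\widehat{V}$ as the closed unit ball of the completed valued field $\widehat{K}$, and invoking the standard ``eventually constant absolute value'' argument to show that completion preserves the value group; rank one of $\widehat{V}$ is then immediate. The paper instead quotes \cite[Chapter 0, Theorem 9.1.1]{FK18} to see that $\widehat{V}$ is a valuation ring, uses \cite[Proposition 6.7.2]{FK18} to identify $\widehat{V}[\frac{1}{t}]$ as its fraction field, and then---in keeping with the appendix's theme---applies Krull's Proposition~\ref{completeintKrull} together with \cite[Corollary 2.7]{NS19} (preservation of complete integral closedness under $t$-adic completion) to rule out $\dim\widehat{V}\ge 2$. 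Your argument is more elementary and self-contained, requiring no outside citations beyond basic facts about metric completions; the paper's argument, while leaning on external results, is designed to illustrate the utility of Krull's complete-integral-closure characterization of rank-one valuation rings, which is the declared purpose of the appendix.
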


\begin{proof}
Let $|\cdot|:K \to \mathbb{R}_{\ge 0}$ be the non-archimedean norm associated to $V$. Since $|t|<1$, we have $|t^n| \to 0$ $(n \to \infty)$. This implies that $V$ is $t$-adically separated. By \cite[Chapter 0, Theorem 9.1.1]{FK18}, the $t$-adic completion $\widehat{V}$ is a valuation ring, which in particular implies that $\widehat{V}$ is $t$-adically separated. By applying \cite[Proposition 6.7.2]{FK18}, we find that $\widehat{V}[\frac{1}{t}]$ is the field of fractions of $\widehat{V}$. By Proposition \ref{completeintKrull}, $V$ is completely integrally closed in $V[\frac{1}{t}]$. So \cite[Corollary 2.7]{NS19} shows that $\widehat{V}$ is completely integrally closed in $\widehat{V}[\frac{1}{t}]$. Again Proposition \ref{completeintKrull} applies and we conclude that $\widehat{V}$ is a valuation ring of dimension one.
\end{proof}

\end{document}